\newcommand{\CC}{\ensuremath{\mathbb{C}}}
\newcommand{\RR}{\ensuremath{\mathbb{R}}}
\newcommand{\ZZ}{\ensuremath{\mathbb{Z}}}
\newcommand{\hol}{\ensuremath{\mathcal{O}}}
\newcommand{\ra}{\ensuremath{\rightarrow}}
\newcommand{\lra}{\longrightarrow}
\newcommand\sF{{\mathcal F}}
\newcommand\sO{{\mathcal O}}
\newcommand\sC{{\mathcal C}}
\newcommand\bR{{\mathbb R}}
\newcommand\bZ{{\mathbb Z}}
\newcommand\bC{{\mathbb C}}
\newcommand\bP{{\mathbb P}}
\newcommand\rh{{\dasharrow}}
\newcounter{lemma}
\theoremstyle{plain} %text of this environment is typesetted in italics
\newtheorem{theorem}{\noindent\bf Theorem}[section]
\newtheorem{lemma}[theorem]{\noindent\bf Lemma}
\newtheorem{corollary}[theorem]{\noindent\bf Corollary}
\newtheorem{proposition}[theorem]{\noindent\bf Proposition}
\theoremstyle{definition}
\newtheorem{remark}[theorem]{\noindent\bf Remark}
\title[Real tori] {On volume preserving complex structures  on real
tori}
\author{Fabrizio Catanese, Keiji Oguiso and Thomas Peternell}
\thanks{The present work took place in the realm of the DFG 
Forschergruppe 790 "Classification of algebraic
surfaces and compact complex manifolds". AMS Classification:  32Q55, 32J17, 32J18
}
\date{\today}
\begin{document}

\begin{abstract} A basic problem in the
classification theory of compact complex manifolds is to give simple characterizations of complex tori.
It is well known that a compact K\"ahler manifold  $X$ homotopically equivalent
to a a complex torus is biholomorphic to a complex torus.

The question whether a compact complex manifold  $X$ diffeomorphic
to  a complex torus is biholomorphic to a complex torus
has  a negative answer due to a construction by Blanchard and Sommese.

Their examples have however negative Kodaira dimension, thus it makes sense to ask the question
whether  a compact complex manifold  $X$ with trivial canonical bundle
which is homotopically equivalent
to  a complex torus is biholomorphic to a complex torus.

In this paper we show that the answer is positive for complex threefolds satisfying
 some additional condition, such as the existence of 
 a non constant meromorphic function. 
\end{abstract}

\maketitle
\tableofcontents
\section{Introduction}
\noindent

The Enriques-Kodaira  classification of
compact complex surfaces implies in particular that a compact complex surface homotopically 
equivalent to a complex
torus of dimension
$2$ is biholomorphic to a complex torus of dimension 2. 
The corresponding result in dimension 1 was 
 already known in the 19th century.

Surprisingly, the analogous result in dimension 3 is no longer true, as 
shown by  Sommese using  results of Blanchard 
(\cite{So75}, p.213, (E) after
\cite{Bl53}; see also
\cite{Ca02}, Section 5, \cite{Ca04}, Section 7). 

Indeed there are  countably many families of
complex manifolds even diffeomorphic to a complex torus of
dimension $3$, which are not biholomorphic to a  complex torus.

These are constructed as follows: let $L$ be a line bundle on a curve $C$,
generated by global sections (if $C$ is an elliptic curve, it suffices that the degree
of $L$ be at least $2$). Let $s_1, s_2 \in H^0(C,L)$ be two sections without common zeros,
so that $s : = (s_1,s_2)$ is a nowhere vanishing section of the rank two vector bundle
$ L \oplus L$. Identifying the fibre $\CC^2$ with the quaternions,
one finds that $s, is, js, ks$ yield four sections $ \in H^0(C,L\oplus L) $
giving an $\RR$ -basis over each point (hence the total space of $ L \oplus L$ is
diffeomorphic to a product $ C \times \RR^4$).

Defining $X$ as the quotient of the total space of $ L \oplus L$ by the free abelian subgroup
$\ZZ^4$ generated by the four sections, $X$ is then diffeomorphic to a torus,
yet its canonical bundle $K_X$ has the property  that $ K_X = p^* ( -2L)$ (\cite{Ca04}, Remark 7.3),
in particular $ h^0(X, - K_X) = h^0(C, 2L)$ which, in the case where $C$ is an elliptic curve,
equals $ 2 \ deg (L) \geq  4$. Hence $X$ is not a complex torus, for which $K_X$ is a trivial divisor.

It is now natural to ask which  kind of  additional conditions are
sufficient to characterize
complex tori  as complex
manifolds. The simplest among such conditions, under a weak
K\"ahler assumption (Theorem
\ref{thm0}) requires to have the same integral cohomology algebra.
However, if one drops the
K\"ahler condition, the problem becomes much more difficult and,
so far, not much is known
(see however some characterizations in
\cite{Ca95} and in \cite{Ca04}, especially Proposition 2.9).

The examples of Blanchard-Sommese lead one of the authors 
(\cite{Ca04} (p.269)) to ask the following
question 
\vskip .2cm \noindent
{\it Are there compact complex manifolds $X$ with
trivial canonical bundle $K_X$  which are diffeomorphic but not
biholomorphic to a complex torus?}

\vskip .2cm \noindent 
In response to this question we prove the following theorem as a corollary of more general results
(Theorems
\ref{thm0},
\ref{thm1},
\ref{thm2}, \ref{thm3}, \ref{thm4}):

\begin{theorem}\label{main}  Let $X$ be a compact complex manifold subject to the following conditions.
\begin{enumerate}
\item $X$ is homotopy equivalent to a
complex torus of dimension $3$;
\item $X$ has a dominant meromorphic map to a
compact complex  analytic space $Y$ of smaller dimension, i.e.,
with $0 < \dim\, Y < 3$;
\item $K_X \equiv 0 $, i.e., $\hol_X (K_X) \cong  \sO_X$.
\end{enumerate}
Then $X$ is biholomorphic to a complex torus. 
\end{theorem}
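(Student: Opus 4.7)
The plan is to identify $X$ with its Albanese torus. From condition (1) one reads off $\pi_1(X) \cong \ZZ^6$ and the integral cohomology ring of a $3$-torus; in particular $b_1(X) = 6$ and $\chi(X) = 0$. Condition (3) gives a trivialisation $\om \in H^0(X, K_X)$, and together with Serre duality this yields the symmetries $h^{0,q}(X) = h^{0,3-q}(X)$ on Dolbeault cohomology, so in particular $h^{0,3}(X) = 1$.

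The first substantive step is to prove $h^{1,0}(X) = 3$, so that the Albanese torus $A := H^0(X,\Omega^1_X)^{*} / H_1(X,\ZZ)$ has dimension exactly $3$. Combining the Fr\"olicher inequality $b_1(X) \leq h^{1,0}(X) + h^{0,1}(X)$, the above Serre symmetries, and the constraints that the cup-product structure of $T^3$ imposes on $H^1(X,\CC)$ together with the pairing $\bigwedge^3 H^0(X,\Omega^1_X) \to H^0(X, K_X) \cong \CC$, I expect to pin down $h^{1,0}(X) = 3$. Granting this, the Albanese map $a \colon X \to A$ is well defined, induces an isomorphism on $H^1$, and by the homotopy equivalence in (1) has topological degree one.

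It then remains to show that $a$ is a local biholomorphism. Choosing a basis $\alpha_1, \alpha_2, \alpha_3$ of $H^0(X, \Omega^1_X)$, the Jacobian of $a$ is represented by $\alpha_1 \wedge \alpha_2 \wedge \alpha_3 \in H^0(X,K_X)$, and since $K_X$ is trivial this global section is either identically zero or nowhere vanishing. In the latter case $a$ is \'etale of topological degree one, hence a biholomorphism, and the theorem follows.

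The main obstacle is therefore to rule out the degenerate case $\alpha_1 \wedge \alpha_2 \wedge \alpha_3 \equiv 0$, and this is exactly where hypothesis (2) enters. In that case the three $1$-forms span a holomorphic distribution $\sF$ on $X$ of generic rank at most two. I would resolve the meromorphic map $f \colon X \dashrightarrow Y$ to a holomorphic $\tilde f \colon \tilde X \to Y$ on a bimeromorphic modification, pass to its Stein factorisation, and analyse the relative position of $\sF$ and the fibres of $\tilde f$. When $\dim Y = 2$ the general fibre is a curve, on which the pulled-back $1$-forms must vanish; when $\dim Y = 1$ the general fibre is a smooth surface whose canonical bundle is trivial by adjunction, so by the Enriques--Kodaira classification it is a K3 surface, a complex $2$-torus, or a Kodaira surface. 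In each case, restricting the pulled-back $1$-forms to these fibres and combining with the Hodge- and Chern-number information forced by (1) should contradict the vanishing of the wedge. Balancing the topology of $T^3$, the triviality of $K_X$, and the global structure provided by $f$ in this foliation analysis is the main technical difficulty of the proof.
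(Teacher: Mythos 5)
Your proposal diverges from the paper's strategy at the very first substantive step, and that step is where it breaks down. You want to deduce $h^{1,0}(X)=3$ from the Fr\"olicher inequality, Serre duality and the cup-product structure of $H^*(T^3,\ZZ)$. But these tools only give \emph{lower} bounds on sums of Hodge numbers ($b_1\le h^{1,0}+h^{0,1}$, $h^{0,q}=h^{0,3-q}$, etc.); nothing prevents, say, $h^{1,0}=0$ and $h^{0,1}=6$. Without a K\"ahler-type hypothesis there is no symmetry $h^{1,0}=h^{0,1}$, and indeed the paper's own Section 6 shows that the authors could not establish $h^0(\Omega^1_X)\ge 3$ from hypotheses (1) and (3) even after reducing to $a(X)=0$: they must \emph{assume} $h^0(\Omega^1_X)\ge 3$ or $h^0(T_X)\ge 3$ as an extra hypothesis (Theorem 1.4), and the concluding Remark states explicitly that already the case $h^0(\Omega^1_X)=2$ seems hard to exclude. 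So the quantity you propose to ``pin down'' is essentially the open difficulty the whole paper is built to circumvent. A second, related gap: even granting three independent holomorphic $1$-forms, your Albanese map $X\to H^0(X,\Omega^1_X)^*/H_1(X,\ZZ)$ is only defined if those forms are $d$-closed (otherwise $\int_{x_0}^x\alpha_i$ is not path-independent even up to periods); on a non-K\"ahler threefold holomorphic $1$-forms need not be closed, and the paper is careful to work with $H^0(X,d\sO_X)$ for exactly this reason (Theorem 2.3). Finally, your treatment of the degenerate case $\alpha_1\wedge\alpha_2\wedge\alpha_3\equiv 0$ is only a sketch with no concrete contradiction identified.

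For contrast, the paper never tries to produce the Albanese map directly from (1) and (3). It uses the fibration (2) from the outset: Proposition 2.1 controls $\pi_1$ and Betti numbers of base and fibre; Lemmas 4.2--4.7 and 5.2--5.4 force the base to be a torus (or elliptic curve), the fibres to be tori, and the map to be a smooth submersion (via canonical bundle formulas, flattening, purity of branch locus, and a Mayer--Vietoris argument on the universal cover of the base); then Theorems 3.1 and 3.2 (period maps, local Torelli, principal bundle structure with locally constant transition functions) produce either a K\"ahler metric or enough $d$-closed $1$-forms, after which Theorem 2.3 applies. If you want to salvage your approach you would need an independent source of three \emph{closed} holomorphic $1$-forms, which is precisely what the fibration analysis manufactures.
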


We should remark that  condition  (2) is of course not a necessary
condition for $X$ to be a complex torus.  However, the
only known examples of threefolds which are homeomorphic
 but not biholomorphic to a complex torus are Blanchard-Sommese's
examples and they
are all fibred  over elliptic
curves, as one can see from the construction described above.  So they satisfy
conditions (1) and (2) (but not (3)). 

\vskip .2cm \noindent
As a special case of Theorem \ref{main}, we obtain

\begin{corollary}\label{cor-torus}  Let $X$ be a compact complex
manifold such that:\\ (i) $X$ is homotopy equivalent
to a complex torus of dimension $3$;\\ (ii) $X$ has a non-trivial map
$\alpha : X \to T$  to a positive dimensional
complex torus $T$;\\ (iii) $K_X \equiv 0$.

Then $X$ is biholomorphic to a complex torus (of dimension $3$).
\end{corollary}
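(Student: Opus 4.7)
The plan is to reduce Corollary \ref{cor-torus} to Theorem \ref{main} by using the given map $\alpha$ to produce the intermediate-dimensional dominant meromorphic map required by hypothesis (2) of the theorem. Set $Y := \alpha(X) \subseteq T$; this is a compact complex analytic subspace of $T$, and since $\alpha$ is non-trivial (non-constant), $\dim Y \geq 1$. I split into two cases according to $\dim Y$.

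If $\dim Y \leq 2$, then $0 < \dim Y < 3$, and the surjective holomorphic map $X \to Y$ is in particular a dominant meromorphic map onto a compact complex analytic space of smaller dimension. Combined with hypotheses (i) and (iii) of the corollary, this supplies exactly conditions (1), (2), (3) of Theorem \ref{main}, so $X$ is biholomorphic to a complex torus.

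If instead $\dim Y = 3$, so that $\alpha$ is generically finite onto $Y$, I argue directly rather than via Theorem \ref{main}. After translating and replacing $T$ by the smallest subtorus whose translate contains $Y$, I may assume that $Y$ generates $T$. Since $X \to Y$ is surjective and generically finite and $K_X \equiv 0$, the ramification formula shows that a smooth model of $Y$ has Kodaira dimension $\leq 0$. By a Ueno-Kawamata-type structure theorem for compact analytic subvarieties of complex tori of non-positive Kodaira dimension, $Y$ must be a translate of a subtorus; combined with the generation property, this forces $Y = T$ and $\dim T = 3$. Thus $\alpha : X \to T$ is a surjective generically finite holomorphic map between compact complex $3$-manifolds with trivial canonical bundles; the ramification divisor $R$, defined by $K_X = \alpha^{*} K_T + R$, is effective and numerically zero, hence $R = 0$. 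So $\alpha$ is étale, and since any finite étale cover of a complex torus is a complex torus, $X$ itself is a complex torus.

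I expect the principal difficulty to lie in the second case, specifically in invoking the Ueno-Kawamata structure theorem in the possibly non-K\"ahler analytic category and in verifying the ramification identity for a surjective generically finite holomorphic map of compact complex manifolds. If one restricts attention a priori to $\dim T \leq 3$, these complications disappear: $\dim T \leq 2$ is subsumed by the first case, while $\dim T = 3$ combined with $\dim Y = 3$ forces $Y = T$ by dimensional reasons alone, and the étale argument goes through without any structural input.
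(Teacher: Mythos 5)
Your proof is correct and follows essentially the same route as the paper, which handles $\dim\alpha(X)<3$ by direct appeal to Theorem \ref{main} and $\dim\alpha(X)=3$ by citing Ueno's Lemma 10.1 and Theorem 10.3 (exactly the structure theorem for subvarieties of complex tori that you invoke), followed by the same \'etale argument. The difficulty you flag about the non-K\"ahler category does not actually arise: the image $\alpha(X)$ is a subvariety of the complex torus $T$, which is K\"ahler, and Ueno's results are stated for arbitrary complex tori.
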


This is a direct consequence of Theorem \ref{main} using  (\cite{Ue75}, Lemma
10.1  and Theorem 10.3) in case $\dim \alpha(X) = 3.$ 

\begin{corollary}\label{cor-algdim}  Let $X$ be a compact complex
manifold such that
\begin{enumerate}
\item $X$ is homotopy equivalent
to a complex torus of dimension $3$;
\item either $a(X) > 0$, i.e.,
$X$ has a non-constant meromorphic function,  or the Albanese torus ${\rm Alb}\, (X)$ is non-trivial; 
\item  $K_X \equiv 0$.
\end{enumerate}
Then $X$ is biholomorphic to a complex torus (of dimension $3$).
\end{corollary}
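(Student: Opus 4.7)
The plan is to show that each branch of hypothesis (2) already supplies a map of exactly the shape required by Theorem~\ref{main} or Corollary~\ref{cor-torus}, so that no new argument on $X$ itself is needed; the corollary is a clean packaging of these two earlier results.

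First suppose $a(X) > 0$. By the definition of the algebraic dimension, $X$ carries a non-constant meromorphic function, which we regard as a dominant meromorphic map $f : X \to \mathbb{P}^1$. The target $Y := \mathbb{P}^1$ is a compact complex analytic space with $0 < \dim Y = 1 < 3 = \dim X$. Together with (1) and (3) this is precisely hypothesis (2) of Theorem~\ref{main}, and that theorem gives the conclusion. Note that no separate treatment of the Moishezon case $a(X) = 3$ is required: any one non-constant meromorphic function already furnishes the desired dominant meromorphic map to $\mathbb{P}^1$.

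Now suppose instead that $T := \mathrm{Alb}(X)$ is non-trivial, so $\dim T \geq 1$. The Albanese morphism $\alpha : X \to T$ is holomorphic, and by its universal property the image $\alpha(X)$ generates $T$ as a complex Lie group; in particular $\alpha$ is non-constant. Hence $\alpha$ is a non-trivial map from $X$ to a positive-dimensional complex torus, so conditions (i), (ii), (iii) of Corollary~\ref{cor-torus} are all satisfied, and we conclude that $X$ is biholomorphic to a complex torus of dimension $3$. The only point that might seem to require care is the non-constancy of $\alpha$ in the not-necessarily-K\"ahler setting, but this is built into the very construction of $\mathrm{Alb}(X)$ as the universal complex torus receiving a morphism from $X$; thus no real obstacle arises, and all genuine work has already been done in Theorem~\ref{main} and Corollary~\ref{cor-torus}.
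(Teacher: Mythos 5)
Your proof is correct and follows the route the paper clearly intends (the paper leaves this corollary's proof implicit, stating it right after Corollary \ref{cor-torus}): the branch $a(X)>0$ produces a dominant meromorphic map onto $\mathbb{P}^1$ — dominant because the image of the resolved map is a connected compact analytic subset of $\mathbb{P}^1$ other than a point — which feeds directly into hypothesis (2) of Theorem \ref{main}, while the non-trivial Albanese branch is exactly Corollary \ref{cor-torus} applied to the Albanese morphism, which is non-constant precisely when ${\rm Alb}(X)\neq 0$. Your remark that a single non-constant meromorphic function already suffices, so that the Moishezon case $a(X)=3$ needs no separate treatment, is also correct.
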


The remaining case where $X$ has no non-constant meromorphic function and also no meromorphic
map  to a surface without meromorphic functions seems difficult. 

If however the tangent or the
cotangent bundle have some sections, the situation gets amenable:

\begin{theorem} Let $X$ be a smooth compact complex threefold with $K_X \equiv 0$ homotopy
equivalent to a torus.  If $h^0(T_X) \geq 3$ or
if $h^0(\Omega^1_X) \geq 3,$ then $X$ is biholomorphic to a torus. 
\end{theorem}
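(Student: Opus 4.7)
The two hypotheses are almost symmetric through the isomorphism $T_X \cong \Omega^2_X$ induced by a trivialization of $K_X$. My plan is to reduce both cases to the single statement that $T_X$ is holomorphically trivial, and then analyze this parallelizable situation directly.

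If $h^0(T_X)\ge 3$, choose three linearly independent sections $v_1,v_2,v_3$ and note that $v_1\wedge v_2\wedge v_3 \in H^0(\wedge^3 T_X)=H^0(K_X^{-1})=H^0(\sO_X)=\CC$ must be a non-zero constant (else the $v_i$ would be $\CC$-dependent as sections). Hence $T_X\cong \sO_X^{\oplus 3}$. If instead $h^0(\Omega^1_X)\ge 3$, pick linearly independent $\omega_1,\omega_2,\omega_3$ and consider $\omega_1\wedge\omega_2\wedge\omega_3 \in H^0(K_X)=\CC$. If non-zero, the forms trivialize $\Omega^1_X$ and hence $T_X$. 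If zero, the $\omega_i$ are pointwise linearly dependent everywhere: examining the rank of the evaluation map, either some $\omega_i\wedge\omega_j\equiv 0$, in which case $\omega_j/\omega_i$ is a non-constant meromorphic function on $X$; or $\omega_1\wedge\omega_2\not\equiv 0$ and writing $\omega_3 = f\omega_1+g\omega_2$ with $f,g$ meromorphic, at least one of $f,g$ is non-constant (else the $\omega_i$ would be $\CC$-linearly dependent). Either alternative yields $a(X)>0$ and Corollary~\ref{cor-algdim} immediately gives the conclusion.

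There remains the case $T_X\cong\sO_X^{\oplus 3}$. By Wang's theorem on compact complex parallelizable manifolds, $X=G/\Gamma$ with $G$ a simply connected complex Lie group of complex dimension~$3$ and $\Gamma\cong\pi_1(X)\cong\ZZ^6$ a cocompact discrete subgroup. Asphericity of $X$ (as $K(\ZZ^6,1)$) forces the universal cover $G$ to be contractible, hence solvable (any semisimple factor would destroy contractibility). The decisive step is to show that $G$ must be abelian; I would run through the classification of three-dimensional complex solvable Lie algebras -- abelian $\CC^3$, complex Heisenberg, $\mathfrak{aff}(1)\oplus\CC$, and the family $\CC\ltimes_\phi\CC^2$ -- and show that none of the non-abelian members admits an abelian cocompact lattice. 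For the nilpotent Heisenberg algebra this is Mal'cev's theorem: an abelian cocompact lattice in a simply connected nilpotent Lie group forces the group itself to be abelian. In the non-nilpotent cases the commutator of generic lattice elements has the form $(0,(e^{zA}-I)u)$ with $A\neq 0$, so if $\Gamma$ is abelian the projection of $\Gamma$ to the $\CC$-base must land in the discrete subgroup $\{z:e^{zA}=I\}$, which has rank at most $1$ in $\CC$; this contradicts the rank-$2$ requirement coming from cocompactness. Hence $G=\CC^3$ and $X=\CC^3/\Gamma$ is a complex torus.

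The main obstacle is unmistakably the last step -- bridging \emph{complex parallelizable with abelian fundamental group} and \emph{complex torus}. In dimension three the direct Lie-theoretic case analysis above is short and geometric; a more conceptual alternative would use that for any non-trivial solvable $\mathfrak{g}$ the abelianization $\mathfrak{g}^{\mathrm{ab}}$ is non-zero, producing closed holomorphic 1-forms and (hopefully) a non-trivial Albanese map to apply Corollary~\ref{cor-algdim}. However, the existence of a genuine Albanese torus in the non-K\"ahler setting requires showing that the associated period map has discrete image, which is precisely the delicate point that the direct Lie-algebra argument neatly circumvents.
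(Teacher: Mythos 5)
Your overall route coincides with the paper's: reduce to the statement that $T_X$ (equivalently $\Omega^1_X$) is holomorphically trivial, then write $X = G/\Gamma$ for a simply connected complex Lie group $G$, rule out $G\simeq {\rm SL}(2,\CC)$ by contractibility of the universal cover, and show that a solvable $G$ with an abelian cocompact lattice must be $\CC^3$. The paper performs the first reduction in the opposite logical order: it first invokes Theorem \ref{main} to assume $a(X)=0$, and then a general lemma (Lemma \ref{a=0}: on a manifold with $a(X)=0$ the evaluation map $H^0(X,V)\otimes\sO_X\to V$ is injective, so $r$ independent sections of a rank-$r$ bundle with trivial determinant trivialize it) yields $\Omega^1_X\cong\sO_X^3$ via Corollary \ref{n}; your wedge-product dichotomy, which extracts a non-constant meromorphic function whenever the sections degenerate pointwise and then applies Corollary \ref{cor-algdim}, is the contrapositive of the same mechanism. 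For the Lie-theoretic endgame the paper simply cites Winkelmann for ``a solvable simply connected $G$ with an abelian lattice is abelian'' (Lemma \ref{group}(2)), whereas you sketch a direct classification of three-dimensional solvable complex Lie algebras; that sketch is plausible and the semisimple exclusion is the same, though the non-nilpotent case needs more care than stated, since commutativity of $\Gamma$ only yields the relation $(e^{zA}-I)u'=(e^{z'A}-I)u$ for pairs of lattice elements, not directly $e^{zA}=I$, and the projection of $\Gamma$ to the base $\CC$ need not be discrete a priori.

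One step as written is wrong: in the $h^0(T_X)\ge 3$ branch you assert that $v_1\wedge v_2\wedge v_3\in H^0(\sO_X)$ must be a non-zero constant ``else the $v_i$ would be $\CC$-dependent as sections.'' Pointwise linear dependence does not imply $\CC$-linear dependence of sections on a general compact complex manifold; this implication is exactly what can fail when $a(X)>0$, and proving it under the hypothesis $a(X)=0$ is the whole content of the paper's Lemma \ref{a=0}. The gap is harmless because you already possess the repair: run the same dichotomy you use for the $\omega_i$ (if $v_i\wedge v_j\equiv 0$ then $v_j/v_i$ is a non-constant meromorphic function; otherwise $v_3=fv_1+gv_2$ with $f$ or $g$ non-constant meromorphic), conclude $a(X)>0$, and apply Corollary \ref{cor-algdim}. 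With that sentence replaced, the proposal is correct and is essentially the paper's argument.
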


\par
\vskip 4pt
\noindent {\it Acknowledgements.} The second author is grateful to
the  guest  program of the Alexander von Humboldt
Foundation and to the DFG Forschergruppe 790 ``Classification of
algebraic surfaces and compact complex manifolds'', which
made this collaboration possible.

%\vfill \eject
%\tableofcontents
\section{Preliminaries}
\noindent

We start with some notations. Let $X$ be an irreducible compact complex space. Then
$a(X)$ denotes the algebraic dimension of $X$
(\cite{Ue75}, Definition 3.2), the maximal number of algebraically independent
 meromorphic functions. If $a(X) = \dim X$, 
then $X$ is called a Moishezon manifold.\\
We
also recall that for a compact complex manifold $X$, the Albanese
torus  of $X$ is the complex torus defined by
$${\rm Alb}\, (X) = H^0(X, d\sO_X)^{\vee}/\Lambda\,\, ,$$ where $\Lambda$
is the minimal closed complex Lie group
containing
${\rm Im}\, (H_1(X, \bZ) \to H^0(X, d\sO_X)^{\vee})$. 

We have then the
Albanese  morphism ${\rm alb}_X : X \to {\rm Alb}\,
(X)$ (see \cite{Ue75}, p.101--104), assigning to each point $x$ the class of the linear
functional $\int_{x_0}^x$ on $H^0(X, d\sO_X)$, obtained integrating on a path from $x_0$ to $x$.

\begin{proposition}\label{prop1} Let $f : X \longrightarrow Y$ be a
surjective morphism with connected fibres from a
compact (connected) complex manifold $X$ with
$\pi_1(X) \simeq \bZ^k$ to  a complex manifold $Y$. Let
$F$ be a general fibre of $f$. Then there exists an exact sequence of groups
$$0 \lra A \lra \pi_1(X) \simeq \bZ^{k} \buildrel {f_*} \over
{\lra} \pi_1(Y) \lra 0\,\, ,$$ where $A$ contains ${\rm Im}\, (\pi_1(F)
\to \pi_1(X))$ as a finite index subgroup.  In
particular, $\pi_1(Y)$ is a finitely generated abelian group of rank
$\le k$ and there is an inequality of Betti numbers 
$$b_1(F) + b_1(Y) \ge k = b_1(X)\, .$$                                        
\end{proposition}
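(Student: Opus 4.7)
The strategy is to restrict $f$ over the smooth locus to obtain a genuine fibre bundle, apply the homotopy long exact sequence there, and then pass back to $X$ and $Y$ using that analytic subsets have real codimension at least two.

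Let $Y_0\subset Y$ be the locus over which $f$ is a submersion and set $X_0 = f^{-1}(Y_0)$; both $Y\setminus Y_0$ and $X\setminus X_0 = f^{-1}(Y\setminus Y_0)$ are analytic subsets, hence of real codimension $\ge 2$, and the open inclusions induce surjections $\pi_1(X_0)\twoheadrightarrow\pi_1(X)$ and $\pi_1(Y_0)\twoheadrightarrow\pi_1(Y)$. By Ehresmann's theorem $f|_{X_0}$ is a smooth locally trivial fibration with fibre $F$, and since $F$ is connected its homotopy long exact sequence yields
$$\pi_1(F) \to \pi_1(X_0) \to \pi_1(Y_0) \to 1.$$
Composing with the previous two surjections gives the surjectivity of $f_*:\pi_1(X)\to\pi_1(Y)$ together with the inclusion $N := {\rm Im}(\pi_1(F)\to\pi_1(X)) \subseteq A := \ker f_*$; since $\pi_1(X)\cong\ZZ^k$ is finitely generated abelian, so is the quotient $\pi_1(Y)$, automatically of rank at most $k$.

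The heart of the proof is the finite index $[A:N] < \infty$. I would abelianise the displayed sequence (using right-exactness of $G\mapsto G^{ab}$) to obtain the right-exact sequence $H_1(F)\to H_1(X_0)\to H_1(Y_0)\to 0$, and let $K_{X_0}, K_{Y_0}$ be the kernels of $H_1(X_0)\twoheadrightarrow H_1(X)$ and $H_1(Y_0)\twoheadrightarrow H_1(Y)$; each is generated by meridian loops around the divisorial components of the respective complements. A short diagram chase then identifies $A/N \cong K_{Y_0}/f_*(K_{X_0})$. The key local computation is that for every divisorial component $D\subset Y\setminus Y_0$ and any divisorial component $E\subset f^{-1}(D)$ dominating $D$, the meridian of $E$ in $K_{X_0}$ pushes forward under $f_*$ to the meridian of $D$ times the multiplicity of $E$ in the pull-back $f^*D$ --- a positive integer. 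Hence $f_*(K_{X_0})$ has finite index in $K_{Y_0}$, and $A/N$ is therefore finite.

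The Betti number inequality then drops out: from the short exact sequence $0\to A\to\ZZ^k\to\pi_1(Y)\to 0$ one reads $b_1(Y) = k - \rk(A) = k - \rk(N)$, and since $N$ is a quotient of $H_1(F)$ we have $\rk(N)\le b_1(F)$, giving $b_1(F) + b_1(Y) \ge k$. I expect the main obstacle to be the meridian multiplicity calculation, which requires inspecting the local structure of $f$ in suitable coordinates near a general point of a divisorial component of the discriminant (locally of the form $(z_1,\dots,z_n)\mapsto(\dots,z_n^m)$); everything else is a routine combination of Ehresmann's theorem, the homotopy long exact sequence, and the real-codimension-two fact for analytic subsets.
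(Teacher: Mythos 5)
Your proposal is correct and follows essentially the same route as the paper: both restrict to the locus where $f$ is a smooth fibre bundle, compare the homotopy exact sequence there with the sequence on $X$ and $Y$ using that complements of analytic subsets surject on $\pi_1$, identify $A/N$ with the quotient of the meridian subgroup of the discriminant by its image from upstairs (the paper via the snake lemma on fundamental groups, you via an abelianized diagram chase), and then kill that quotient by the local multiplicity computation. The paper phrases that last step as $f(\tilde\gamma)=d\gamma$ for a transversal disk $\tilde D\subset X$ covering a disk $D\subset Y$ transversal to the discriminant with degree $d$, which is exactly your statement about the multiplicity of a component of $f^*D$.
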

                                                                         
\begin{proof} The following proof is very close to \cite{No83}, Lemma
1.5  and \cite{CKO03}, Lemma 3.

Let $F$ be a general fibre of $f$ and $U \subset Y$  be the maximal
Zariski open subset such that $f$ is smooth over
$U$.  Consider the following commutative diagram of  exact sequences:

\[
\begin{CD} 1 @>>> G @>>> \pi_1(f^{-1}(U)) @>{(f_U)_*}>> \pi_1(U)
@>>> 1\\ @VVV @VV{b_F}V @VV{b}V @VV{c}V @VVV\\ 1
@>>> {\rm Ker}\, f_* @>>> \pi_1(X) \simeq \bZ^{k}  @>{f_*}>> \pi_1(Y) @>>> 1
\end{CD}
\]
\par
\vskip 4pt
\noindent Here $G = {\rm Im}\,(\pi_1(F) \to \pi_1(f^{-1}(U)))$.  Since
$b$ is surjective, the snake lemma
yields the following exact sequence:
$${\rm Ker}\, b \to {\rm Ker}\, c \to  {\rm Coker}\, b_F \to 0\, . $$ Thus
$${\rm Ker}\, c/(f_{U})_*({\rm Ker}\, b) \simeq {\rm Coker}\, b_F  =
{\rm Ker}\, f_*/{\rm Im}\, b_F\, .$$  
Since ${\rm
Ker}\, f_* \subset \bZ^{k}$, it follows that 
$${\rm Ker}\, c/(f_{U})_*({\rm Ker}\, b) \simeq {\rm Coker}\, b_F$$ 
is a 
finitely generated abelian group. On the other  hand,  each $[\gamma]
\in {\rm Ker}\, c$ is represented by the product of conjugates of 
elements represented by a closed
circle
$\gamma $  contained in  $D \cap (Y \setminus U)$ with a base point $x$, where 
$D \simeq \Delta$ is a small disk on $Y$ transversal to $Y \setminus U$ in
 a point $s_0$.  

Since however $\pi_1 (Y)$ is abelian, we see that $ {\rm Ker}\, c$ is generated by such elements. 

For each such element  take a small disk
$\tilde{D} \simeq \Delta$ in $X$ such that $f(\tilde{D})
= D$, and let $d$ be the  degree of the finite branched cover $\tilde{D} \ra D$.

The preimage of $\gamma$
in $\tilde{D}$ is a closed circle
$\tilde{\gamma}$ such that $f(\tilde{\gamma}) =
d\gamma$.  Thus ${\rm Ker}\, c/(f_{U})_*({\rm Ker}\, b)$,
is a torsion group. Hence ${\rm Coker}\, b_F$  is a finite abelian
group. The last statement is clear from the fact
that
$\pi_1(X)$, $\pi_1(Y)$ and $A$ are all abelian.

\end{proof}

From \cite{Ca04},
Proposition 2.9  (see also \cite{Ca95} Corollary C,
\cite{Ca02}, Proposition 4.8), we cite the following

\begin{theorem}\label{thmCa} Let $X$ be a compact complex manifold of
dimension $n$ such that
\begin{enumerate}
\item The cohomology
ring $H^*(X, \bZ)$ is isomorphic to  the cohomology ring of the
$n$-dimensional complex torus. 
\item  $H^0(X, d\sO_X) =
n$, i.e., there are exactly $n$ linearly independent $d$-closed holomorphic $1$-forms. 
\end{enumerate}
Then $X$ is biholomorphic to a complex torus.
\end{theorem}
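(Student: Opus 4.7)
The plan is to construct the Albanese map $\alpha\colon X \to A := \mathrm{Alb}(X)$ and show it is a biholomorphism.

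\emph{The Albanese target is an $n$-dimensional torus.} From (1), $H^1(X,\bZ) \cong \bZ^{2n}$, and $H_1(X,\bZ) \cong \bZ^{2n}$ is torsion-free (universal coefficients, using freeness of $H^2$); moreover $H^\bullet(X,\bZ)$ is the exterior algebra on $H^1$. Assumption (2) embeds $H^0(X,d\sO_X)$ as an $n$-dimensional $\bC$-subspace of $H^1(X,\bC)$. A class in $H^0(X,d\sO_X)\cap\overline{H^0(X,d\sO_X)}$ is represented by both a $d$-closed $(1,0)$-form $\omega$ and a $d$-closed $(0,1)$-form $\bar\eta$; writing $\omega - \bar\eta = dg$ and decomposing by type gives $\omega = \partial g$ and, from $d\omega = 0$, $\partial\bar\partial g = 0$. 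Since complex pluriharmonic functions on a compact connected complex manifold are constant (apply the maximum principle to the real and imaginary parts, each locally the real part of a holomorphic function), one gets $\omega = 0$. Hence $H^0(X,d\sO_X)\cap\overline{H^0(X,d\sO_X)} = 0$, and the dimension count $n+n = 2n = \dim_\bC H^1(X,\bC)$ yields the Hodge-like decomposition
\[
H^1(X,\bC) = H^0(X,d\sO_X) \oplus \overline{H^0(X,d\sO_X)}.
\]
A class $\gamma \in H_1(X,\bR)$ in the kernel of the period map annihilates $H^0(X,d\sO_X)$, and by conjugation also $\overline{H^0(X,d\sO_X)}$, hence all of $H^1(X,\bC)$, forcing $\gamma = 0$. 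Thus the period map $H_1(X,\bR) \to H^0(X,d\sO_X)^\vee$ is an $\bR$-isomorphism, the image lattice $\Lambda$ is full, $A = \bC^n/\Lambda$ is an $n$-dimensional complex torus, and $\alpha_*\colon H_1(X,\bZ) \to H_1(A,\bZ)$ is an isomorphism.

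\emph{Cohomology and degree.} Dually, $\alpha^*\colon H^1(A,\bZ) \to H^1(X,\bZ)$ is an isomorphism. Both cohomology rings are exterior algebras on $H^1$, and $\alpha^*$ is multiplicative, so $\alpha^*$ is an iso in every degree. The top-degree iso forces $\alpha$ to be surjective of topological degree one, so $\alpha$ is a bimeromorphic holomorphic morphism between smooth compact complex $n$-folds; in particular, $X$ lies in Fujiki's class $\sC$.

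\emph{Biholomorphism.} Using $K_A \cong \sO_A$, the top exterior power of $d\alpha$ is a canonical section $s \in H^0(X, K_X)$ whose zero divisor $E_\alpha$ is effective, supported on the exceptional locus of $\alpha$, and satisfies $K_X \cong \sO_X(E_\alpha)$. Since $\alpha(E_\alpha)$ has codimension $\geq 2$ in $A$, one has $\alpha_*[E_\alpha] = 0$; writing $[E_\alpha] = \alpha^*\beta$ via the $H^2$-iso and applying the projection formula with $\deg\alpha = 1$ gives $\beta = \alpha_*\alpha^*\beta = \alpha_*[E_\alpha] = 0$, hence $[E_\alpha] = 0$ in $H^2(X,\bZ)$. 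Pulling back via a Kähler modification $\mu\colon \tilde X \to X$ (available since $X \in \sC$), the line bundle $\mu^*\sO_X(E_\alpha)$ has trivial first Chern class on the compact Kähler $\tilde X$ and carries the nonzero section $\mu^* s$; on a compact Kähler manifold, a line bundle with $c_1 = 0$ admitting a nonzero section must be trivial, so $\mu^* s$ is nowhere zero. Hence $s$ is nowhere zero on $X$ (as $\mu$ is surjective), $E_\alpha = 0$, $\alpha$ is étale of degree one, and therefore a biholomorphism $X \cong A$.

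The main obstacle is the final step: the cohomological vanishing $[E_\alpha] = 0$ does not by itself force $E_\alpha = 0$ for general compact complex manifolds, and upgrading it requires the Kähler positivity accessed through the class-$\sC$ structure inherited from the bimeromorphic map to the torus $A$.
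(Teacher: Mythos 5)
The paper does not actually prove Theorem \ref{thmCa}; it is quoted without proof from \cite{Ca04}, Proposition 2.9 (see also \cite{Ca95}, Corollary C, and \cite{Ca02}, Proposition 4.8), so there is no in-paper argument to compare yours against. Your proof is correct and is essentially the argument of those cited references: the $n$ closed holomorphic $1$-forms give the splitting $H^1(X,\bC)=H^0(X,d\sO_X)\oplus\overline{H^0(X,d\sO_X)}$ (via the constancy of pluriharmonic functions), hence an $n$-dimensional Albanese torus and an Albanese morphism that is a ring isomorphism on integral cohomology and therefore a degree-one modification, after which the ramification divisor is shown to be cohomologically trivial and then actually zero by pulling back to a K\"ahler modification. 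The only step you leave implicit is that a $d$-exact closed holomorphic $1$-form vanishes (so that $H^0(X,d\sO_X)$ genuinely injects into $H^1(X,\bC)$), but this follows at once from the maximum principle, so I see no genuine gap.
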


If $X$ is bimeromorphically equivalent to a K\"ahler manifold, our main problem is easily
answered. 

\begin{theorem}\label{thm0} Let $X$ be a compact complex manifold
such that
\begin{enumerate}
\item The cohomology ring $H^*(X, \bZ)$
is isomorphic to  the cohomology ring of the $n$-dimensional complex
torus (for instance,
$X$ is homotopy equivalent to a complex torus of dimension $n$).
\item  $X$ is in the Fujiki class $\sC$, i.e., $X$  is
bimeromorphic to a compact K\"ahler manifold.
\end{enumerate}
Then $X$ is biholomorphic to a complex torus of dimension $n$.
\end{theorem}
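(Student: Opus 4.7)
The plan is to reduce Theorem \ref{thm0} to Theorem \ref{thmCa} by showing that under the hypotheses we have $H^0(X, d\sO_X) = n$, that is, there are exactly $n$ linearly independent $d$-closed holomorphic $1$-forms. The cohomology-ring hypothesis immediately gives $b_1(X) = 2n$, so the task splits into two parts: proving that every holomorphic $1$-form on $X$ is automatically $d$-closed, and proving that $h^0(X, \Omega^1_X) = n$.

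For both parts, the key tool is that manifolds in the Fujiki class $\sC$ satisfy the $\partial\oline{\partial}$-lemma, and consequently admit a Hodge decomposition with Hodge symmetry. Concretely, I would first choose a modification $\mu : \widetilde{X} \to X$ with $\widetilde{X}$ compact K\"ahler. A holomorphic $1$-form $\omega$ on $X$ pulls back to a holomorphic $1$-form $\mu^*\omega$ on $\widetilde{X}$, which is $d$-closed because $\widetilde{X}$ is K\"ahler; since $d\omega = 0$ holds on the dense open locus where $\mu$ is biholomorphic, it holds on all of $X$. This gives $H^0(X,\Omega^1_X) = H^0(X, d\sO_X)$.

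Next I would verify the dimension count $h^0(X,\Omega^1_X) = n$. Using the Fujiki-class Hodge decomposition
\begin{equation*}
H^1(X,\bC) \;=\; H^{1,0}(X) \oplus H^{0,1}(X),
\end{equation*}
together with Hodge symmetry $h^{0,1}(X) = h^{1,0}(X) = h^0(X,\Omega^1_X)$, we obtain $2 h^0(X,\Omega^1_X) = b_1(X) = 2n$. Combined with the previous step this gives $H^0(X, d\sO_X) = n$, and Theorem \ref{thmCa} then produces a biholomorphism of $X$ with a complex torus, which must have dimension $n$ since $b_1 = 2n$.

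The only real obstacle is locating and applying the right Hodge-theoretic fact for class $\sC$. The $\partial\oline{\partial}$-lemma and Hodge decomposition for Fujiki-class manifolds are standard (they descend from the K\"ahler model via the invariance of Bott--Chern/Dolbeault cohomology under bimeromorphic modifications, as in the works of Varouchas and Deligne--Griffiths--Morgan--Sullivan); alternatively one can argue directly that $h^{1,0}$ and $d$-closedness of holomorphic $1$-forms are bimeromorphic invariants among smooth compact complex manifolds, pulling back to $\widetilde{X}$ and pushing forward via $\mu_*$ to compare with $H^0(X,\Omega^1_X)$. Once one is willing to invoke these results, the argument is essentially a one-line reduction; the substance of the theorem lies entirely in the preparatory work done in Theorem \ref{thmCa}.
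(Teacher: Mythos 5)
Your proposal is correct and follows essentially the same route as the paper: both reduce to Theorem \ref{thmCa} by using the Hodge-theoretic properties of Fujiki class $\sC$ manifolds (the paper cites \cite{Fj78}, Corollary 1.7) to conclude that every holomorphic $1$-form is $d$-closed and that $h^0(X,d\sO_X)=n$. The only difference is that you sketch the derivation of these class-$\sC$ facts from a K\"ahler modification, whereas the paper simply cites Fujiki.
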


\begin{proof}
We apply Theorem \ref{thmCa} to our $X$. The first condition in Theorem \ref{thmCa} holds 
by assumption.     
In
particular, $b_1(X) = 2n$.  As $X$ is in class $\sC$, every
holomorphic form is $d$-closed  and the Hodge
decomposition holds for $X$ (\cite{Fj78}, Corollary 1.7).  Thus the second condition in  Theorem \ref{thmCa}
also holds and an application of  Theorem \ref{thmCa} implies the result.
\end{proof}

A special case of Theorem \ref{thm0} is 

\begin{corollary}\label{cor1} A Moishezon manifold $X$  homotopy
equivalent to a complex torus of dimension
$n$ is biholomorphic to an abelian variety.
\end{corollary}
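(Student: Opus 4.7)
The plan is to reduce this Corollary to Theorem \ref{thm0} and then upgrade the conclusion from \emph{complex torus} to \emph{abelian variety} using the Moishezon hypothesis.

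First, I would observe that every Moishezon manifold lies in the Fujiki class $\sC$. Indeed, by Moishezon's theorem, there is a projective manifold $X'$ and a bimeromorphic morphism $X' \to X$ obtained as a sequence of blow-ups with smooth centres; since $X'$ is projective, it is in particular K\"ahler, so $X$ is bimeromorphic to a compact K\"ahler manifold and hence belongs to class $\sC$. Thus hypothesis (2) of Theorem \ref{thm0} is satisfied, while hypothesis (1) holds by the assumption that $X$ is homotopy equivalent to a complex torus of dimension $n$ (homotopy equivalence induces an isomorphism of integral cohomology rings).

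Applying Theorem \ref{thm0} now yields that $X$ is biholomorphic to a complex torus $T$ of dimension $n$. It remains to verify that $T$ is an abelian variety. This is where the Moishezon hypothesis re-enters: since $X$ is Moishezon we have $a(T) = a(X) = \dim X = n$, so $T$ carries $n$ algebraically independent meromorphic functions. A standard result in the theory of complex tori states that a complex torus whose algebraic dimension equals its complex dimension admits a positive line bundle and is therefore projective, i.e., an abelian variety.

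There is essentially no obstacle: the statement is a formal consequence of Theorem \ref{thm0} once one recalls the two classical facts (Moishezon $\Rightarrow$ bimeromorphic to projective, and a complex torus with maximal algebraic dimension is an abelian variety). The one point worth being careful about is that Theorem \ref{thm0} is applied to $X$ itself (not to its projective model), so one really needs $X$ to be in class $\sC$, which is provided by Moishezon's theorem.
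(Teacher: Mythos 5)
Your proposal is correct and matches the paper's intended argument: the paper presents this corollary as an immediate special case of Theorem \ref{thm0}, relying on exactly the two classical facts you cite (Moishezon implies class $\sC$, and a complex torus of maximal algebraic dimension is an abelian variety). Nothing further is needed.
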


Recall that a compact complex manifold is said to be a {\it Moishezon}
manifold  if the algebraic dimension is maximal: $a(X) = \dim
X.$ 

\section{Complex torus bundles over a complex torus}
\setcounter{lemma}{0}
\noindent In this section we prove two general results on submersions of special manifolds
(Theorems \ref{thm1}, \ref{thm2}). These results are
used in our proof of our Main Theorem \ref{main}. The crucial
point in both results is that we do  {\it not}
assume the total space $X$ to be K\"ahler.

\begin{theorem}\label{thm1} Let $f : X \to Y$ be a holomorphic submersion  
 with connected fibres between compact (connected) complex manifolds and assume:
\begin{enumerate}
\item 
$X$ has complex dimension $n+m$ and trivial canonical divisor $K_X \equiv 0$;
\item $Y$   has complex dimension  $m$ and also
 $K_Y \equiv 0$;
\item every fiber $X_y$ ($y \in Y$) is K\"ahler;
\item the monodromy action of $\pi_1(Y)$ on $H^n(X_y,
\bZ)$ is trivial.
\end{enumerate} 
Then all the fibres $X_y$ are biholomorphic,  and $f$ is a holomorphic
fibre bundle.
\end{theorem}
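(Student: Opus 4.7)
The plan is to show that the Kodaira--Spencer map $\kappa_y : T_y Y \to H^1(X_y, T_{X_y})$ vanishes at every $y \in Y$, and then upgrade this to a local holomorphic trivialization of $f$ by combining Bogomolov--Tian--Todorov unobstructedness with Fischer--Grauert's theorem.

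From $K_X \equiv 0$ and $K_Y \equiv 0$ the relative cotangent sequence gives $K_{X/Y} \cong K_X \otimes f^*K_Y^{-1} \cong \sO_X$, and by adjunction each fibre has $K_{X_y} \cong \sO_{X_y}$. Together with hypothesis~(3), every fibre $X_y$ is thus a compact K\"ahler manifold with trivial canonical bundle; in particular $h^{n,0}(X_y) = 1$ with generator a nowhere vanishing holomorphic $n$-form $\omega_y$, and contraction with $\omega_y$ supplies the canonical isomorphism $T_{X_y} \cong \Omega^{n-1}_{X_y}$, hence $H^1(X_y, T_{X_y}) \cong H^{n-1,1}(X_y)$, the identification being cup product with $[\omega_y]$.

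Next I would exploit hypothesis~(4) to trivialize the relevant Hodge bundle. Fibrewise K\"ahlerness yields fibrewise Hodge-to-de Rham degeneration and hence the Gauss--Manin isomorphism $\sH^n := R^n f_* \Omega^\bullet_{X/Y} \cong (R^n f_* \CC) \otimes_\CC \sO_Y$ of holomorphic bundles on $Y$. Trivial monodromy makes the local system $R^n f_* \CC$ constant, so $\sH^n \cong \sO_Y^{\oplus N}$ with $N = \dim H^n(X_{y_0},\CC)$. The top Hodge piece is
\[
F^n \sH^n \;=\; f_*\Omega^n_{X/Y} \;=\; f_*K_{X/Y} \;=\; f_*\sO_X \;=\; \sO_Y,
\]
also holomorphically trivial. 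Any subsheaf inclusion $\sO_Y \hookrightarrow \sO_Y^{\oplus N}$ on the compact connected manifold $Y$ corresponds to a nonzero element of $H^0(Y, \sO_Y^{\oplus N}) = \CC^N$, so the image of $F^n\sH^n$ in $\sH^n$ is a constant line in $\CC^N$. Equivalently, the period map $\phi : Y \to \PP(H^n(X_{y_0}, \CC))$ defined by $y \mapsto [\omega_y]$ is constant, and in particular $d\phi \equiv 0$. By Griffiths transversality $d\phi_y(v) = \kappa_y(v) \cup [\omega_y] \in H^{n-1,1}(X_y)$; by the identification above this cup product vanishes if and only if $\kappa_y(v) = 0$. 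Hence $\kappa \equiv 0$ on all of $Y$.

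The main obstacle is the final step: converting vanishing Kodaira--Spencer into an actual biholomorphism of the fibres and then into a local trivialization of $f$. By the Bogomolov--Tian--Todorov theorem the Kuranishi space $\mathrm{Def}(X_{y_0})$ is smooth for every $y_0 \in Y$, so by universality, near any $y_0$ the family is induced by a holomorphic classifying map $(Y,y_0) \to \mathrm{Def}(X_{y_0})$ whose differential at each point equals $\kappa$ under the natural identification of tangent spaces. Since $\kappa \equiv 0$ this map is locally constant; connectedness of $Y$ then forces $X_y \cong X_{y_0}$ for every $y \in Y$, and the Fischer--Grauert theorem converts fibrewise isomorphism to a holomorphic fibre bundle structure on $f$.
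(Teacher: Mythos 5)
Your proposal is correct and follows essentially the same route as the paper: use the trivial monodromy to trivialize $R^nf_*\bZ$, use $\omega_{X/Y}\cong\sO_X$ and $f_*\omega_{X/Y}\cong\sO_Y$ to get a holomorphically varying family of nowhere vanishing $n$-forms, conclude that the (holomorphic, by Griffiths, which needs only fibrewise K\"ahlerness) period map is constant because $Y$ is compact, invoke local Torelli for K\"ahler fibres with trivial canonical class, and finish with Fischer--Grauert. The only cosmetic difference is that you pass through vanishing of the Kodaira--Spencer map together with Bogomolov--Tian--Todorov smoothness of the Kuranishi space, whereas the paper applies the injectivity of the period map on the Kuranishi space directly; both are equivalent formulations of local Torelli.
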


\begin{proof} By (4), $R^nf_* \bZ_X$ is
not only locally constant but also {\it globally
constant} on $Y$. Thus,  for the $\bZ_Y$ dual local system, we have
$$(R^n f_* \bZ_X)^{*} \simeq H_n(X_b, \bZ)_f \times Y\, .$$ 

Here $b
\in Y$ is any base point and $H_n(X_b, \bZ)_f$
denotes the free part of $H_n(X_b, \bZ)$. The same abbreviation will be
applied  for other points $y \in Y$. Let
$$\gamma_{1, b}, \ldots , \gamma_{k, b}$$ be a basis of $H_n(X_b, \bZ)_f$ and let
$$\tilde{\gamma}_1, \ldots , \tilde{\gamma}_k$$ be the corresponding
flat basis of $(R^n f_* \bZ_X)^{*}$ over $Y$.

Then the elements $\tilde{\gamma}_{i, y}$ ($1 \le i \le k$) form a free basis of
$H_n(X_y, \bZ)_f$ for each $y \in Y$.

Now, following Fujita \cite{Fu78}, p. 780-781, we construct a
family of holomorphic $n$-forms on the fibres, say
$\{\varphi_y\}_{y \in Y}$, which varies holomorphically with respect
to $y \in Y$. 

Since
$$\omega_{X/Y} = \sO (K_X) \otimes f^* \sO(K_Y)^{\vee} \simeq \sO_X\,\,
,$$
we obtain that $$f_* (\omega_{X/Y} ) \cong \hol_Y. $$
  We are done by the exact sequence 
$$ 0 \ra f^* ( \Omega_Y^1) \ra \Omega_X^1 \ra \Omega_{X|Y}^1 \ra 0$$
and since by definition 
$$ \omega_{X/Y} : = det ( \Omega_{X|Y}^1) = \Lambda^n (  \Omega_{X|Y}^1) .$$

Hence a global generator of $f_* (\omega_{X/Y} ) \cong \hol_Y $ gives the desired family of
holomorphic $n$-forms on the fibres, yielding a nowhere vanishing form on each fibre.

Note that $\varphi_y$ is $d$-closed being a top holomorphic form.

\vskip .2cm \noindent
Now we consider the {\it non-projectivized, global} period map:
$$\tilde{p}_Y : Y \to \bC^k\,\, ;\,\, y \mapsto
(\int_{\tilde{\gamma}_{i, y}} \varphi_y)_{i=1}^k\,\, .$$ This map is
holomorphic by a fundamental result of Griffiths. Indeed, to be able to apply
\cite{Gr68}, Theorem (1.1), we need that the fibres $X_y$ are
K\"ahler  but we do not need that the total space $X$ be
K\"ahler. 

On the other hand, since $Y$ is compact, the global {\it
holomorphic} functions on $Y$ are constant. Thus all functions
$$ y \mapsto \int_{\tilde{\gamma}_{i, y}} \varphi_y$$ 
are constant on $Y$.
Hence the usual period map $p_Y : Y \to \bP^{k-1}$,
which is just the projectivization of the target domain $\bC^k$ of
$\tilde{p}_Y$, is also constant as well.

As all the fibers $X_y$ ($y \in Y$) are compact K\"ahler manifolds with trivial
canonical class, the local Torelli Theorem holds
for them, i.e., the period map from  the Kuranishi space to the
period domain is injective (see e.g. \cite{GHJ03},
p. 109, Theorem 16.9; the proof given there is written only for
Calabi-Yau 3-folds, but the proof in the general case is
exactly the same). 

Since $p_Y$ is constant  and $Y$ is connected, it
follows that all the fibres $X_y$ are biholomorphic.
Hence $f$ is locally analytically trivial by the fundamental result
of Grauert-Fischer (or by Kuranishi's theorem). This concludes the proof.
 
\end{proof}

\begin{theorem}\label{thm2} Let $f : X \to Y$ be a holomorphic submersion  
 with connected fibres between compact (connected) complex manifolds and assume:
\begin{enumerate}
\item $ X$ is  homotopy equivalent to a complex 
torus  of dimension $n+m$; 
\item $Y$ is a complex 
torus of dimension $m$; 
\item some fibre $X_y$  is 
biholomorphic to a complex torus.
\end{enumerate}
Then $f : X \to Y$ is a principal holomorphic torus bundle  and $X$
is biholomorphic to a complex torus.
\end{theorem}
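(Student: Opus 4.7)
The plan is to verify the hypotheses of Theorem~\ref{thm1} and then upgrade the resulting fibre bundle to a principal one, finally concluding via Theorem~\ref{thmCa}.

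First I apply Proposition~\ref{prop1} to $f\colon X\to Y$. Since $\pi_1(X)\cong\bZ^{2(n+m)}$ is abelian and $\pi_1(Y)\cong\bZ^{2m}$, the resulting kernel $A$ has rank $2n$, and the torus fibre $F_0$ has $\pi_1(F_0)\cong\bZ^{2n}$ mapping into $A$ with finite-index image. Both being torsion-free of the same rank, the map $\pi_1(F_0)\hookrightarrow\pi_1(X)$ is injective. Abelianness of $\pi_1(X)$ then implies the monodromy of $\pi_1(Y)$ on $\pi_1(F_0)$ is trivial (it factors through conjugation in an abelian group), so the action on $H^\bullet(F_y,\bZ)=\Lambda^\bullet\Hom(\pi_1(F_y),\bZ)$ is also trivial, which is hypothesis~(4) of Theorem~\ref{thm1}.

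Next I must verify the remaining hypotheses of Theorem~\ref{thm1}: $K_X\equiv 0$ and K\"ahlerness of every fibre. Since $K_Y\equiv 0$, we have $K_X=K_{X/Y}$, and $K_{X/Y}|_{F_0}$ is trivial. By base change, $f_*K_{X/Y}$ is a line bundle $\mathcal L$ on $Y$ whose stalk at $y_0$ is $H^0(F_0,K_{F_0})$. Invoking the trivial-monodromy argument on the top Hodge piece of $H^\bullet(F_y,\bC)$, the fibrewise volume forms glue to a nowhere-vanishing section of $\mathcal L$, whence $\mathcal L\cong\sO_Y$ and $K_X=f^*\mathcal L\cong\sO_X$. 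For K\"ahlerness of every fibre: the K\"ahler class on $F_0$ is $\pi_1(Y)$-invariant (by the trivial monodromy on $H^2(F_y,\bR)$), so it extends to a horizontal section of the constant local system $R^2f_*\bR$; openness of the K\"ahler cone together with an open-and-closed argument on the connected $Y$ propagates K\"ahlerness to every fibre. This last step --- propagation of K\"ahlerness in a family whose total space is \emph{a priori} not K\"ahler --- is the main obstacle I foresee.

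With the hypotheses of Theorem~\ref{thm1} verified, $f$ is a holomorphic fibre bundle with every fibre biholomorphic to the torus $T:=F_0$. Because $\Aut(T)=T\rtimes\Aut(T,0)$ and the monodromy on $\pi_1(T)$ is trivial, the structure cocycle takes values in the translation subgroup $T$, so $f$ is a principal holomorphic $T$-bundle. To conclude $X$ is a complex torus, I apply Theorem~\ref{thmCa}: the homotopy equivalence supplies the cohomology-ring hypothesis, and I construct $n+m$ linearly independent $d$-closed holomorphic $1$-forms by pulling back the $m$ forms from $Y$ and taking the $n$ translation-invariant $(1,0)$-forms along the fibres (globally defined on $X$ because the bundle is principal). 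Thus $X$ is biholomorphic to a complex torus of dimension $n+m$.
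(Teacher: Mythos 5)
Your overall architecture (check the hypotheses of Theorem~\ref{thm1}, upgrade to a principal bundle, finish with Theorem~\ref{thmCa}) is the same as the paper's, and your monodromy computation via Proposition~\ref{prop1} is fine (the paper gets the same conclusion slightly more directly from the homotopy sequence, using $\pi_2(Y)=0$ and the splitting of $\bZ^{2(n+m)}$). But the step you yourself flag as the main obstacle is a genuine gap, not a technicality. The locus of $y$ with $X_y$ K\"ahler is open (stability of K\"ahlerness under small deformations), but it is \emph{not} closed in a smooth family: Hironaka-type examples give smooth proper families of threefolds whose general fibre is projective while a special fibre is Moishezon non-K\"ahler, so an open-and-closed argument cannot work in general (it happens to work when the fibres are curves or surfaces, where K\"ahlerness is topological, but the theorem is stated for arbitrary $n$). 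The paper disposes of this in its first sentence by invoking a genuinely non-elementary result, Catanese's deformation-in-the-large theorem (\cite{Ca04}, Theorem 2.1): in a smooth family over a connected base with one fibre a complex torus, \emph{every} fibre is a complex torus. This gives hypothesis (3) of Theorem~\ref{thm1} and is also what makes the fibre $F$ itself available as structure group later; there is no soft substitute for it. (Your derivation of $K_X\cong\sO_X$ by ``gluing fibrewise volume forms'' is also shaky --- $f_*\omega_{X/Y}$ is a line subbundle of a flat bundle, and flatness of the ambient local system does not by itself trivialize a subbundle --- though in all the paper's applications $K_X\equiv 0$ is a standing hypothesis.)

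The second gap is at the very end. For a principal $F$-bundle whose transition functions are translations by \emph{non-constant} holomorphic sections $t_{ij}:U_{ij}\to F$, the fibrewise invariant $(1,0)$-forms $dz_k$ transform by $dz_k\mapsto dz_k+dt_{ij,k}$; they glue only as sections of $\Omega^1_{X/Y}$, not as holomorphic $1$-forms on $X$. Primary Kodaira surfaces are the standard counterexample: principal elliptic bundles over an elliptic curve with $h^{1,0}=1$, where the fibre-direction form does not globalize. So ``principal'' is not enough to produce your $n$ extra closed forms (nor, in the other variant, a global K\"ahler form). This is exactly why the paper spends the second half of its proof showing that the cocycle $\eta\in H^1(Y,\sF_Y)$ lifts to $H^1(Y,F_Y)$, i.e.\ that the transition functions may be chosen \emph{locally constant}; the lift uses the vanishing of the topological obstruction in $H^2(Y,\Gamma)$ (the bundle is topologically trivial since the $\pi_1$-sequence splits) together with the surjectivity of $H^1(Y,\bC^n)\to H^1(Y,\sO_Y^n)$ coming from the Hodge decomposition on the K\"ahler base $Y$. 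You need to add this reduction before your final construction of the $n+m$ closed holomorphic $1$-forms (your reduction of the structure group from $\Aut(F)$ to the translations $F$ via triviality of the monodromy on $H_1(F,\bZ)$ is fine, and is an acceptable alternative to the paper's Kodaira--Spencer argument for that particular step).
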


\begin{proof} By \cite{Ca04}, Theorem 2.1, every fibre $X_y$ is
isomorphic to  a complex torus of dimension $n$. Let
$F = X_y$ be one of the fibres of $f$. Since $\pi_2(Y) = 0$, we have the
following exact sequence
$$0 \to \pi_1(F) \simeq \bZ^{2m} \to \pi_1(X) \to \pi_1(Y) \simeq
\bZ^{2n} \to 0\, .$$ 

Since $\pi_1(X) \simeq
\bZ^{2(n+m)}$ by (1), this sequence splits and $\pi_1(Y)$ acts on $\pi_1(F)$
as the identity. Then, by Theorem \ref{thm1}, $f$
is a holomorphic fibre bundle. In particular, the Kodaira-Spencer map
$$T_{Y, y} \to H^1(X_y, T_{X_y})$$ 
of $f$ is zero
at every point $y \in Y$. Then, by \cite{Ca04}, Proposition 3.2 and its
proof, $f$ is a principal fibre bundle with
structure group $F$, i.e., a fibre bundle whose transition
functions are given by translations by local holomorphic
sections of $F$ over $Y$. We want to show that they can actually
chosen to be locally constant.
\vskip .2cm \noindent
To verify this, we follow \cite{BHPV04}, p.196. Set $\Gamma = H_1(F,
\bZ) \simeq \bZ^{2n}$. 

Consider the following
commutative diagram of exact  sequences of abelian sheaves on $Y$:
\[
\begin{CD} 0 @>>> \Gamma = \Gamma_Y @>>> \bC_Y^n @>>> F_Y @>>> 0\\
@VVV @V=VV @VVV @VVV @VVV\\ 0 @>>> \Gamma =
\Gamma_Y @>>> \sO_Y^n @>>> \sF_Y @>>> 0
\end{CD}
\]
\par
\vskip 4pt
\noindent Here $F_Y$ is the abelian sheaf of locally constant
sections with values in $F$  and $\sF_Y$ is the abelian sheaf
of holomorphic sections with values in $F$. 

Taking the 
corresponding cohomology sequences yields the diagram
\[
\begin{CD} H^1(Y, \bC^n) @>\gamma>> H^1(Y, F_Y) @>>> H^2(Y, \Gamma)\\
@V\beta_1VV @V\beta_2VV @V=VV\\ H^1(Y, \sO_Y^n)
@>\alpha>> H^1(Y, \sF_Y) @>c>> H^2(Y, \Gamma)
\end{CD}
\]
\par
\vskip 4pt
\noindent Let $\eta \in H^1(Y, \sF_Y)$ be the class representing the
principal holomorphic bundle structure of
$f : X \to Y$. Set $\epsilon = c(\eta)$.  Note that $f$ is
topologically trivial, since the exact sequence of the
fundamental group splits trivially. Thus $\epsilon = 0$ and therefore $\eta
= \alpha(\eta_1)$  for some $\eta_1 \in H^1(Y,
\sO_Y^n)$. Since $Y$ is K\"ahler,
the map $\beta_1$ is the one induced by the
natural projection under the Hodge  decomposition
$$H^1(Y, \bC) = H^1(\sO_Y) \oplus H^0(\Omega_Y^1)\, .$$ In
particular, $\beta_1$ is surjective. Thus $\eta_1 =
\beta_1(\eta_2)$ for  some $\eta_2 \in H^1(Y, \bC^n)$. Hence
$$\eta = \alpha\beta_1(\eta_2) = \beta_2\gamma(\eta_2) =
\beta_2(\eta_3),$$ where $\eta_3 = \gamma(\eta_2) \in H^1(Y,
F_Y)$. This means that the transition functions defining the
principal bundle structure $f : X \to Y$ can be chosen to be
{\it locally constant}.
\vskip .2cm \noindent

We can now use two arguments, here is the first.

Let $Y = \cup_{i \in I} U_i$ be a sufficiently small open covering of
$Y$  with trivializations
$$\varphi_i : X_{U_i} \simeq F \times U_i\,\, ,$$ such that the
transition functions $\varphi_i^{-1} \circ \varphi_j$ are all
constant  on $U_i \cap U_j$. Let $\tau_Y$ be a standard K\"ahler form
on $Y$ and $\tau_F$ be a standard K\"ahler form
on $F$. 

Set $\tau_i = \tau_Y \vert U_i$. Then $\tilde{\tau}_i :=
\varphi_i^{*}(\tau_i \wedge \tau_F)$ gives a
K\"ahler form on $X_{U_i}$. As $\varphi_i^{-1} \circ \varphi_j$ is a
translation by some {\it constant} element of $F$ over
$U_i \cap U_j$, it follows that $\tilde{\tau}_i = \tilde{\tau}_j$ on
$X_{U_i} \cap X_{U_j}$. Hence
$\{\tilde{\tau}_i\}_{i \in I}$ defines a  global K\"ahler form on
$X$. In particular
$X$ is K\"ahler and therefore $X$ is biholomorphic to a complex torus  by
Theorem \ref{thm0}.

Alternatively, one immediately sees that the fact that the transition functions defining the
principal bundle structure $f : X \to Y$ are
 locally constant implies that the space of closed holomorphic 1-forms
$ H^0 (X, d \hol_X)  $ has dimension at least $n$, and we can
apply Theorem \ref{thmCa}.

\end{proof}

\section{A characterization of complex tori - the case fibred by curves}
\setcounter{lemma}{0}
\noindent The goal of this section is to prove the following
\begin{theorem}\label{thm3}  Let $X$ be a compact complex manifold
subject to the following conditions.
\begin{enumerate} 
\item $X$ is homotopy equivalent to a
complex torus of dimension $m+1$;
\item there is a dominant
meromorphic map $f : X \rh Y$ to a compact complex manifold
$Y$ with
$\dim\, Y = m$;
\item $ m \leq 2$ or $Y$ is Moishezon with $\kappa (Y) \geq 0;$  
%if in (2) $\dim\, Y = m \ge 3$, then
%\begin{enumerate}
%\item
%$Y$ is
%bimeromorphic to a complex torus and $f$ has connected fibres,
%or 
%\item
%$Y$ is a Moishezon variety  with $\kappa(Y) \ge 0$; 
%\end{enumerate}
\item $K_X \equiv 0$.
\end{enumerate}
Then $X$ is biholomorphic to a complex torus of dimension $m+1$.
\end{theorem}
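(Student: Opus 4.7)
The plan is to reduce to the hypotheses of Theorem \ref{thm2}, i.e., to exhibit $X$ (possibly after a finite \'etale cover) as a holomorphic submersion over an $m$-dimensional complex torus with elliptic fibre. First I would resolve the meromorphic map $f$ to a holomorphic $\tilde f : \tilde X \to Y$ by a sequence of smooth blow-ups $\pi : \tilde X \to X$ of centres of codimension $\geq 2$; this preserves $\pi_1$, giving $\pi_1(\tilde X) \cong \ZZ^{2(m+1)}$. Since $K_X$ is trivial, $K_{\tilde X} = E_\pi$ is an effective $\pi$-exceptional divisor, and a general fibre $\tilde F$ of $\tilde f$ is disjoint from $E_\pi$ (since $\pi$ restricts to a biholomorphism of $\tilde F$ onto a regular fibre of $f$). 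Therefore $K_{\tilde F} = K_{\tilde X}|_{\tilde F} = 0$, so $\tilde F$ is a smooth elliptic curve. Replacing $Y$ by a resolution of the Stein factorisation, I may also assume $\tilde f$ has connected fibres.

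Proposition \ref{prop1} applied to $\tilde f$ then yields an exact sequence
\[ 0 \to A \to \ZZ^{2(m+1)} \to \pi_1(Y) \to 0, \]
with $A$ containing the image of $\pi_1(\tilde F) \cong \ZZ^2$ as a finite-index subgroup, together with the inequality $b_1(Y) \geq 2m$. So $\pi_1(Y)$ is a finitely generated abelian group of rank $\geq 2m$. This topological rigidity is the engine that should allow one to identify $Y$ as a torus.

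The crucial step is to upgrade $Y$ to a complex $m$-torus. Here I would combine Kodaira's canonical bundle formula for the elliptic fibration,
\[ K_{\tilde X} \sim_\QQ \tilde f^*(K_Y + L) + V, \]
in which $L$ is the pseudo-effective moduli $\QQ$-divisor and $V$ is a vertical $\QQ$-divisor supported on the singular and multiple fibres, with the fact that $K_{\tilde X} = E_\pi$ is $\pi$-exceptional. Pushing this identity forward along $\tilde f$ forces $\kappa(Y) \leq 0$. Combined with $b_1(Y) \geq 2m$, classification then pins $Y$ down: for $m \leq 2$ the Enriques--Kodaira classification of surfaces identifies $Y$ as a complex torus (the case $m = 2$ using that $b_1(Y) = 4$ forces $Y$ K\"ahler, and the shape of the canonical bundle formula excludes the ruled case of genus $2$); in the Moishezon case with $\kappa(Y) \geq 0$ the minimal model programme together with the Beauville--Bogomolov decomposition identifies the minimal model of $Y$ as an abelian variety, and the topological rigidity lets one conclude that this is $Y$ itself.

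Finally, with $Y$ identified as a torus, I would straighten $\tilde f$ into a holomorphic submersion with elliptic fibre by contracting the $\tilde f$-vertical components of $E_\pi$ and absorbing any remaining multiple fibres via a finite \'etale base change, which is permissible because \'etale covers of tori are tori. Theorem \ref{thm2} then implies that $X$ is biholomorphic to a complex torus. The main obstacle is really the third paragraph: extracting, from the mere numerical triviality of $K_X$, enough control on the singular and multiple fibres of $\tilde f$ both to identify $Y$ as a torus exactly and to reduce $\tilde f$ to a genuine submersion. This is precisely where the dichotomy $m \leq 2$ (handled by surface classification) versus $Y$ Moishezon with $\kappa(Y) \geq 0$ (handled by higher-dimensional MMP) plays its role in the hypotheses.
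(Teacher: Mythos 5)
Your overall plan coincides with the paper's (resolve $f$, show the general fibre is elliptic, identify $Y$ with a complex torus, show the fibration is a submersion, apply Theorem \ref{thm2}), but two of your steps have genuine gaps.

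First, the assertion that a general fibre $\tilde F$ of $\tilde f$ is disjoint from $E_\pi$ is exactly the assertion that $f$ is almost holomorphic, and your justification for it (that $\pi$ restricts to a biholomorphism on $\tilde F$) is circular. If some component of $E_\pi$ dominates $Y$, then $K_{\tilde F}=E_\pi|_{\tilde F}$ is a nonzero effective divisor, so $g(\tilde F)\ge 2$ and Proposition \ref{prop1} only yields $b_1(Y)\ge 2m-2$; the entire elliptic-fibration framework (Kodaira's canonical bundle formula, the Betti-number count identifying $Y$) then breaks down. Since exceptional divisors of a resolution of indeterminacy are covered by rational curves, this failure can only occur when $Y$ is uniruled --- which hypothesis (3) does not exclude when $m\le 2$ (e.g.\ $Y$ rational or ruled with $\kappa(Y)=-\infty$; note that your push-forward of the canonical bundle formula only gives $\kappa(Y)\le 0$, not $\kappa(Y)\ge 0$). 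The paper treats this case separately in Lemma \ref{lem1'}: when $g(F)\ge 2$ it uses the relative pluricanonical map over the projective base to get $a(X)=3$, so $X$ is Moishezon and one concludes by Corollary \ref{cor1}; when $Y$ is of class VII it proves almost-holomorphy from non-uniruledness and derives a Betti-number contradiction. Your proposal has no mechanism for the high-genus case. (In the case $\kappa(Y)\ge 0$ the paper instead derives ellipticity of $F$ from Ueno's additivity theorems, not from disjointness.)

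Second, the final ``straightening'' step is where most of the actual work lies, and your sketch does not engage with it. Once $Y$ is a torus, the original $f:X\to Y$ is already holomorphic, so there is nothing to contract; what must be proved is that $f$ itself is a holomorphic submersion, because Theorem \ref{thm2} requires a submersion from $X$, not from a modification or a finite cover of it. The paper does this in three nontrivial stages: smoothness in codimension one, via the canonical bundle formula for elliptic surfaces applied to $f^{-1}(C)\to C$ for a general complete intersection curve $C$ (resp.\ via the algebraic reduction when $Y$ is a non-projective surface); equidimensionality, ruling out fibre components of dimension $\ge 2$ by Hironaka's flattening theorem combined with the canonical bundle formula for elliptic threefolds (Lemma \ref{lem2}); and absence of multiple and singular fibres via purity of the branch locus and local Weierstrass models (Lemma \ref{lem3}). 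In particular, ``absorbing multiple fibres by a finite \'etale base change'' is not available here: it would replace $X$ by a different manifold, and the point is precisely to show that $K_X\equiv 0$ forbids multiple and singular fibres altogether.
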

In the rest of this section, we shall prove Theorem \ref{thm3} and always assume
the situation of Theorem \ref{thm3}. 
Take a resolution of indeterminacies
$\nu : \tilde{X} \lra X$ of $f$, yielding a surjective morphism
$$\tilde{f} : \tilde{X} \lra Y$$ 
By considering the Stein factorization we may assume that $\tilde f$ 
has connected fibers; loosely speaking $f$ has connected fibers. 
In case $Y$ is Moishezon, we may replace $Y$ by a suitable birational model
and therefore may assume $Y$ to be projective. Finally
$F$ will always denote a smooth fibre of $\tilde{f}$.

\begin{lemma}\label{lem1}
\begin{enumerate}
\item If $\kappa (Y) \geq 0,$ all smooth fibers of $\tilde f$
are isomorphic to a single elliptic curve, say $E$, and $\kappa (Y) = 0.$  
\item If moreover $Y$ is projective, then
$Y$ is birational to an abelian variety of dimension $m$.
More                                               
precisely, the  Albanese  map $a : Y \to {\rm
Alb}\, Y$ is a birational surjective morphism.  
\end{enumerate}                      
\end{lemma}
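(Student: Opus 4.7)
My approach combines adjunction and Iitaka subadditivity (to identify the fibre), Kodaira's canonical bundle formula (for isotriviality), and, for part~(2), Proposition~\ref{prop1} together with Kawamata's theorem on Albanese maps of projective varieties of Kodaira dimension zero.

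For (1), first fix a general fibre $F$ of $\tilde f$. Because $\nu:\tilde X \to X$ is a birational morphism between smooth compact complex manifolds with $K_X \cong \sO_X$, one has $K_{\tilde X} = \sum a_i E_i$ where the $E_i$ are $\nu$-exceptional prime divisors and $a_i > 0$. For $y \in Y$ general, the fibre $F = \tilde f^{-1}(y)$ is the strict transform under $\nu$ of the fibre $f^{-1}(y)\subset X$ and avoids the $\nu$-exceptional locus, so by adjunction $K_F = K_{\tilde X}|_F = 0$, and $F$ is an elliptic curve. The Iitaka inequality $C_{n,n-1}$ (classical when fibres are curves) then gives
$$0 \;=\; \kappa(\tilde X) \;\geq\; \kappa(F) + \kappa(Y) \;=\; \kappa(Y),$$
which combined with the assumption $\kappa(Y) \geq 0$ forces $\kappa(Y) = 0$.

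The isotriviality statement in (1)---that all \emph{smooth} fibres, not only the general ones, are biholomorphic to one single elliptic curve $E$---is the principal technical obstacle. My plan is to pass to a relatively minimal model of $\tilde f$ and apply Kodaira's canonical bundle formula in the Fujita--Kawamata form,
$$K_{\tilde X} \;\equiv_{\QQ}\; \tilde f^{*}(K_Y + M) + D,$$
with $D$ supported on singular and multiple fibres and $M$ the nef moduli $\QQ$-divisor pulled back from the $j$-line. Since $K_{\tilde X}$ is $\QQ$-linearly equivalent to an effective $\nu$-exceptional divisor and $\kappa(Y) = 0$, Fujita--Kawamata semipositivity forces $M \equiv 0$; hence the $j$-invariant of the smooth fibres is constant, and local Torelli for elliptic curves yields that all smooth fibres are biholomorphic to a single $E$.

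For (2), $Y$ is now projective. Since $\nu$ is birational between smooth compact manifolds, $\pi_1(\tilde X) \simeq \pi_1(X) \simeq \ZZ^{2(m+1)}$. Applying Proposition~\ref{prop1} to $\tilde f:\tilde X \to Y$ with general fibre $F$ an elliptic curve yields
$$b_1(Y) \;\geq\; b_1(\tilde X) - b_1(F) \;=\; 2(m+1) - 2 \;=\; 2m.$$
Hodge theory on the projective $Y$ gives $b_1(Y) = 2q(Y) \leq 2\dim Y = 2m$, hence $q(Y) = m$. The Albanese morphism $a:Y\to \mathrm{Alb}(Y)$ is therefore a morphism onto a complex torus of dimension $m = \dim Y$, and in particular generically finite. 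By Kawamata's theorem on the Albanese map of projective varieties with $\kappa = 0$, the map $a$ is surjective with connected fibres; being also generically finite, $a$ must be a surjective birational morphism onto the abelian variety $\mathrm{Alb}(Y)$, as required.
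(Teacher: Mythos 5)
Your overall strategy tracks the paper's quite closely: for (1) the paper also starts from the effectivity of $K_{\tilde X}$ restricted to a general fibre and then quotes Ueno's inequality $0=\kappa(\tilde X)\ge \max(\kappa(Y),\mathrm{var}(\tilde f))$ ([Ue87], Theorems 2.1 and 2.2), which yields $\kappa(Y)=0$ \emph{and} $\mathrm{var}(\tilde f)=0$ (hence isotriviality of the smooth fibres) in one stroke, where you instead go through the canonical bundle formula and semipositivity of the moduli part; for (2) both arguments are Proposition \ref{prop1} plus Kawamata. However, two of your steps are not justified as written. In (1) you assert that the general fibre of $\tilde f$ avoids the $\nu$-exceptional locus, i.e.\ that $f$ is almost holomorphic. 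This is not automatic: the exceptional divisors of $\nu$ have dimension $m=\dim Y$ and may dominate $Y$ (the paper must explicitly rule this out in the proof of Lemma \ref{lem1'}, using that $Y$ is not uniruled). It can be rescued here -- the exceptional locus is covered by $\nu$-contracted rational curves, so if it dominated $Y$ then $Y$ would be uniruled, contradicting $\kappa(Y)\ge 0$ -- but that argument must be supplied. Alternatively one only needs that $K_F=K_{\tilde X}|_F$ is effective, hence $g(F)\ge 1$, after which $C_{n,n-1}$ gives $\kappa(F)\le -\kappa(Y)\le 0$ and so $g(F)=1$; this is the paper's (cleaner) route and sidesteps the issue entirely.

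In (2), the inequality $q(Y)\le \dim Y$ is \emph{not} a consequence of ``Hodge theory on the projective $Y$'': projective manifolds can have arbitrarily large irregularity (already any curve of genus $\ge 2$). The correct bookkeeping is that Proposition \ref{prop1} bounds $b_1(Y)$ from above as well, since $\pi_1(Y)$ is a quotient of $\pi_1(X)\simeq \ZZ^{2(m+1)}$, so $q(Y)\in\{m,\,m+1\}$; the case $q(Y)=m+1$ is then excluded precisely by Kawamata's theorem, which for $\kappa(Y)=0$ gives surjectivity of the Albanese map and hence $q(Y)\le\dim Y$. Since you invoke Kawamata two sentences later anyway, the repair is immediate, but as written the upper bound is obtained from a false general principle rather than from the hypothesis $\kappa(Y)=0$ where it actually comes from.
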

                                                                  
\begin{proof} (1) Since $K_{\tilde X}$ is effective, the fiber $F$ has genus 
$g(F) \ge 1$. Then by \cite{Ue87}, Theorem 2.1, $F$ must actually be
an elliptic curve. Moreover by  \cite{Ue87}, Theorem 2.2, 
$$0 = \kappa(X) = \kappa(\tilde{X})\, \ge\, {\rm max}\, (\kappa(Y),
{\rm var}(\tilde{f})) \ge 0\, ,$$ 
where ${\rm var}(\tilde{f}) $ denotes the variation of $\tilde f.$      
Thus $\kappa(Y)
= 0$ and ${\rm var}(f) = 0$ and the first assertion is proven. 
\vskip .2cm \noindent
(2) For the second assertion assume now that $Y$ is projective.
Note that
$\pi_1(\tilde{X}) \simeq \bZ^{2(m+1)}$, since 
$X$ is homotopy equivalent to  a complex torus of dimension $m+1. $ Thus, applying Proposition \ref{prop1},
$$\pi_1(Y) \simeq \bZ^n$$ (up to torsion) for some integer
$n$ such that $2m \le n \le 2(m+1)$. Since $Y$ is a projective manifold,
Hodge decomposition gives 
either $n = 2m$ or $n = 2(m+1)$ and $h^1(\sO_Y) = m$ or $h^1(\sO_Y) = m+1$.   Since 
$\kappa(Y) = 0$, a fundamental result due to
Kawamata (\cite{Ka81}, main theorem) yields 
$$h^1(\sO_Y) =
m ( = \dim \, Y,)$$ and also the birationality of the  Albanese morphism $a : Y \lra {\rm Alb} \ Y.$
This completes the proof.
\end{proof}

\begin{lemma}\label{lem1'} Assume that $m \le 2$. Then $X$ is a complex torus or the following two statements
hold (recall that we assume $f$ to have connected fibers).
\begin{enumerate}
\item All smooth fibres are isomorphic to a fixed elliptic curve, say $E$;
\item $Y$ is bimeromorphic to a complex torus of dimension $2$.  More
precisely, the  Albanese  map $a : Y \to {\rm
Alb}\, Y$ is a bimeromorphic surjective morphism.
\end{enumerate} 
\end{lemma}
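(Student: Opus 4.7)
For $m=1$, $X$ is a compact complex surface homotopy equivalent to a $2$-dimensional complex torus; by the Enriques--Kodaira classification (as recalled in the introduction) $X$ is then biholomorphic to a complex torus, which is the first alternative of the lemma. So assume from now on that $m=2$.

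I would first dispose of the Moishezon case for $X$: if $a(X)=3$, Corollary \ref{cor1} immediately yields that $X$ is biholomorphic to an abelian threefold. So assume $a(X)\le 2$; pullback of meromorphic functions along $f$ then gives $a(Y)\le 2$. Since $K_X\equiv 0$ and $\nu:\tilde X\to X$ is a proper modification, $K_{\tilde X}$ is an effective divisor supported on the exceptional locus of $\nu$; by adjunction along a general smooth fibre $F$ of $\tilde f$ (using $F|_F=0$ since $F$ is a fibre over a surface) we get $\deg K_F\ge 0$, hence $g(F)\ge 1$. Using the Ueno--Kawamata results from \cite{Ue87} exactly as in the proof of Lemma \ref{lem1}(1), one concludes $\kappa(Y)\le 0$ and, in the subcase $g(F)=1$, conclusion (1): all smooth fibres of $\tilde f$ are mutually isomorphic elliptic curves and in fact $\kappa(Y)=0$.

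To obtain (2) in the case $g(F)=1$, I would split on $a(Y)$. If $a(Y)=2$ then $Y$ is Moishezon; after a bimeromorphic modification to a projective model, Lemma \ref{lem1}(2) applies and gives that the Albanese map of $Y$ is a bimeromorphic morphism onto an abelian surface. If $a(Y)\le 1$, then Proposition \ref{prop1} applied to $\tilde f$ yields
$$b_1(Y)\ge b_1(\tilde X)-b_1(F)=6-2=4,$$
which together with $\kappa(Y)=0$ and the Enriques--Kodaira classification of compact complex surfaces (whose $\kappa=0$ classes other than complex $2$-tori all satisfy $b_1\le 3$) forces $Y$ to be bimeromorphic to a complex $2$-torus, so (2) holds.

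The delicate remaining case is $g(F)\ge 2$, i.e.\ $\kappa(Y)=-\infty$, in which neither (1)--(2) nor the torus structure of $X$ is directly visible from the argument above. My plan is to combine the Betti estimate $b_1(Y)\ge 6-2g(F)$ of Proposition \ref{prop1} with the Enriques--Kodaira classification of surfaces of negative Kodaira dimension (ruled in the Moishezon case, class VII otherwise). In the ruled subcase, composing $\tilde f$ with the ruling $Y\to B$ produces a meromorphic map $X\rh B$ onto a curve, whose Stein factorization together with Theorems \ref{thm1} and \ref{thm2} should force $X$ to be a complex torus; the class VII subcase should be excluded outright by the $b_1$ bound and the torsion-free abelian structure of $\pi_1(X)\cong \mathbb{Z}^6$. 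This is the main obstacle I anticipate, as it requires carefully handling the non-K\"ahler surface classes.
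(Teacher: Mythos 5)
Your treatment of $m=1$ and of the case $\kappa(Y)\ge 0$ is essentially the paper's: surface classification for $m=1$, then Lemma \ref{lem1}(1) combined with the Betti-number inequality of Proposition \ref{prop1} ($b_1(F)+b_1(Y)\ge 6$, while every non-torus minimal surface with $\kappa=0$ has $b_1\le 3$). However, your organization of the remaining cases by the genus of $F$ is already logically off: Lemma \ref{lem1}(1) gives the implication ``$\kappa(Y)\ge 0\Rightarrow F$ elliptic and $\kappa(Y)=0$'', not its converse, so your claim that $g(F)=1$ entails $\kappa(Y)=0$ is unjustified --- a base with $\kappa(Y)=-\infty$ and elliptic general fibre is a priori possible, and is exactly the configuration that arises in the class $VII$ subcase. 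The real content of the lemma for $m=2$ is the exclusion of $\kappa(Y)=-\infty$, which you leave as a plan, and that plan does not work as stated.

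Concretely: (a) a minimal surface of class $VII$ is \emph{not} excluded by $b_1$ together with the abelianness and torsion-freeness of $\pi_1(X)$ --- a Hopf surface has $b_1(Y)=1$ and may have $\pi_1(Y)\cong\bZ$, and Proposition \ref{prop1} then only forces $b_1(F)\ge 5$, which is no contradiction at all. The paper's argument here is geometric: since a class $VII$ surface is not dominated by a uniruled surface, the exceptional locus of $\tilde X\to X$ cannot dominate $Y$, so $f$ is almost holomorphic; hence the general fibre is an honest fibre of $f$ inside $X$ and is elliptic by adjunction ($K_X\equiv 0$ and trivial normal bundle), and only then does $b_1(F)+b_1(Y)=2+1<6$ contradict Proposition \ref{prop1}. (b) In the ruled subcase, Theorems \ref{thm1} and \ref{thm2} concern holomorphic submersions with K\"ahler, respectively torus, fibres and trivial monodromy; they cannot be applied to the meromorphic composite $X\rh B$ without first establishing all of that, and routing the problem through the ``fibred over a curve'' analysis would be circular, since Lemma \ref{lem4} sends the case $Y=\bP^1$ back to Theorem \ref{thm3}, whose proof contains the present lemma. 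What the paper does instead is note that $\pi_1(Y)$ abelian forces $b_1(Y)\le 2$, hence $b_1(F)\ge 4$ and $g(F)\ge 2$; it then takes the \emph{relative pluricanonical map} of $\tilde X$ over the projective surface $Y$ to obtain a dominant, generically finite map onto a projective threefold $Z$, concludes $a(X)=3$, and finishes with Corollary \ref{cor1} (or Theorem \ref{thm0}). These two arguments are needed; the Betti-number and fundamental-group estimates alone do not close the case $\kappa(Y)=-\infty$.
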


\begin{proof} When $m=1$, we have $\dim\, X =2$. Then by classification $X$ is a
complex torus since $K_X = \sO_X$ and $b_1(X) = 4$. 
So from now we shall assume that $m=2$. We may also assume that $Y$ is a minimal surface.

Suppose first that $\kappa(Y) \ge 0$, hence $\kappa (Y) = 0$ by 4.2(1), and $F$ is a fixed elliptic curve. If $Y$ would not be
complex torus, then $b_1(Y) \le 3$ by classification. Then however 
$$b_1(F) + b_1(Y) \le 5 < 6 = b_1(X) = b_1(\tilde{X})\,\, ,$$ a
contradiction to Proposition
\ref{prop1}.

It remains to consider the case where $\kappa(Y) = -\infty$.
If in addition $Y$ is  K\"ahler, then $Y$ is projective (rational or
birationally ruled)  by classification. We also
have $b_1(Y) \le 2$  by the fact that $\pi_1(Y)$
is abelian (Proposition \ref{prop1}). Then $b_1(F) \ge 4$ for the
general  fibre $F$ of $\tilde{f} : \tilde{X} \to
Y$, 
again by Proposition \ref{prop1}. Therefore $g(F) \ge
2$,  where $g(F)$ is the genus of the curve $F$. Then we have a
relative pluri-canonical map
$\hat{X} \rh Z$ of $\tilde{X}$ over $Y$ (\cite{Ue75}, Theorem 12.1
and its proof). As $Y$ is projective, $Z$ is a
projective $3$-fold by the construction given there. Hence
$$a(X) = a(\tilde{X}) = a(Z) = 3$$  and we conclude that $X$ is
biholomorphic to a complex torus  by Theorem
\ref{thm0} or by Corollary \ref{cor1}, and we  are done.

If $\kappa(Y) = -\infty$ and $Y$ is not K\"ahler, then $Y$ is a
minimal surface of class $VII$. In particular,
$Y$ is not covered by rational curves and $b_1(Y) = 1$. Now observe that $f$ is almost holomorphic
in the sense that $f$ is proper
holomorphic over Zariski dense  open subset of $Y$. Indeed, otherwise
the exceptional locus  of the resolution of indeterminacies $\tilde{X} \to X$ 
dominates $Y$, so that $Y$ would be dominated by a uniruled surface contradicting the assumption that $Y$ is of class $VII.$
Now $f$ being almost holomorphic, the
general fibre $F$ of $\tilde{f}$ is an elliptic curve by adjunction.
Thus
$$b_1(F) + b_1(Y) = 3 < 6 = b_1(X) = b_1(\tilde{X}),$$  a
contradiction to Proposition \ref{prop1}. This completes the
proof.
\end{proof}

The upshot of the preceeding two lemmata is that we may assume $Y$ to be a torus. 
In particular the meromorphic  map $f : X \rh Y$ (from our {\it original} $X$) is
holomorphic and all smooth fibers 
are isomorphic to a fixed elliptic curve
$E$.

\begin{lemma}\label{lem2}
\begin{enumerate}
\item  $f$ is smooth in codimension $1$, that is, the set  of critical
values of $f$ is of codimension $\ge 2$ on $Y$;
\item $f$ is equi-dimensional, or equivalently, $f$ is a flat morphism.
\end{enumerate} 
\end{lemma}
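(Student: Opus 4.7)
My approach exploits two rigidifying features: $f$ is isotrivial, every smooth fibre being biholomorphic to a fixed elliptic curve $E$, and the relative dualizing sheaf is trivial, $\omega_{X/Y} \simeq \hol_X$, as follows from $K_X \equiv 0$, $K_Y \equiv 0$ and $\omega_{X/Y} = \omega_X \otimes f^{*}\omega_Y^{-1}$.

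For part (1) I would argue by contradiction: assume the discriminant of $f$ contains a prime divisor $D \subset Y$, pick a general point $y_0 \in D$ and a small disc $\mathbb{D} \subset Y$ transversal to $D$ at $y_0$, and restrict to obtain a smooth elliptic surface $S := f^{-1}(\mathbb{D}) \to \mathbb{D}$ with an isolated singular fibre over $y_0$. Isotriviality forces the local monodromy on $H^1(E,\bZ)$ to preserve the complex structure on $E$, hence to lie in the finite group $\Aut(E,0) \cap SL_2(\bZ)$, of order $2$, $4$ or $6$; this already excludes the Kodaira types $I_n$ and $I_n^{*}$ ($n \ge 1$). For the remaining possibilities, namely multiple smooth fibres ${}_m I_0$ and the finite-monodromy reducible types $II, III, IV, I_0^{*}, IV^{*}, III^{*}, II^{*}$, I would apply Kodaira's canonical bundle formula to $S \to \mathbb{D}$, noting that every line bundle on the contractible disc is trivial, together with $\omega_{S/\mathbb{D}} \simeq \omega_{X/Y}|_S \simeq \hol_S$; this rules out the multiple fibres. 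For the reducible types I would pass to the finite étale cover $\widetilde{\mathbb{D}}^{*} \to \mathbb{D}^{*}$ trivialising the local monodromy, realise $S$ as a quotient of the trivial family $\widetilde{\mathbb{D}} \times E$ by a cyclic group acting diagonally, and invoke smoothness of $X$ (hence of $S$) to force the action to be free, contradicting the presence of a singular fibre.

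For part (2), part (1) implies that the critical locus $\Sigma \subset Y$ is finite. If $f$ were not equidimensional, some fibre $f^{-1}(y_0)$ with $y_0 \in \Sigma$ would contain a two-dimensional irreducible component $Z$, necessarily a prime divisor in $X$ contracted to a single point. Adjunction together with $K_X \equiv 0$ yields
$$K_Z \equiv (K_X + Z)|_Z \equiv N_{Z/X},$$
and the standard negativity of contracted divisors gives $Z \cdot C \le 0$ for every curve $C \subset Z$. I would then invoke the homotopy torus hypothesis: $H^{*}(X,\bZ)$ has the ring structure of a $3$-torus, so for the generic fibre class $[F] \in H^4(X,\bZ)$ the cup product $[Z] \cup [F] \in H^6(X,\bZ) \simeq \bZ$ vanishes (since $Z$ is disjoint from a generic fibre), and Poincaré duality on this ring pins down $[Z]$ in a way incompatible with $Z$ being contracted to a point.

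The main obstacle will be the analysis of the finite-monodromy reducible Kodaira fibres in (1): the canonical bundle formula in its standard form does not directly register them, so one needs the explicit quotient description after trivialising the monodromy and must track precisely how smoothness of the ambient threefold $X$ restricts the resulting cyclic action on the trivial model. Part (2) is conceptually simpler, but since $X$ need not be K\"ahler the concluding intersection-theoretic step relies essentially on the homotopy torus hypothesis rather than on standard Hodge-theoretic tools, which makes the last cohomological argument delicate.
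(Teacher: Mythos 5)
Your part (1) has a genuine gap at its central step. Over a small disc $\mathbb{D}$ the canonical bundle formula reads $\omega_{S/\mathbb{D}}\simeq f^{*}(R^{1}f_{*}\sO_S)^{-1}\otimes\sO_S(\sum(m_i-1)F_i)$, and since every line bundle on a disc is trivial it only detects \emph{multiple} fibres: the additive types $II,III,IV,I_0^{*},IV^{*},III^{*},II^{*}$ are invisible to it. Concretely, the Weierstrass family $y^{2}=x^{3}+t$ over the $t$-disc has smooth total space, is relatively minimal, is isotrivial ($j\equiv 0$, every smooth fibre the hexagonal curve, with local monodromy of order $6$ in $\Aut(E,0)$), has $\omega_{S/\mathbb{D}}\simeq\sO_S$, and yet has a cuspidal (type $II$) fibre over $t=0$; so every local hypothesis you impose is satisfied while the conclusion fails. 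Your fallback for these types --- trivialising the monodromy by a cyclic cover and arguing that smoothness of $S$ forces the action on $\widetilde{\mathbb{D}}\times E$ to be free --- does not repair this, because the relatively minimal smooth model is in general only \emph{birational} to the quotient $(\widetilde{\mathbb{D}}\times E)/\bZ_k$ (one must resolve its quotient singularities), so smoothness of $S$ imposes no constraint on the action; the same example witnesses this. Excluding additive fibres genuinely requires a global input, which is how the paper proceeds: it restricts $f$ to a \emph{compact} complete-intersection curve $C\subset Y$ (resp.\ to the compact fibres of the algebraic reduction when $Y$ is a non-projective surface) and uses that over a compact base $\deg(R^{1}f_{*}\sO)^{-1}=\chi(\sO_{X_C})=\frac{1}{12}\sum e(F_i)$, strictly positive as soon as one non-multiple singular fibre exists, so that $K_{X_C/C}\equiv 0$ kills all singular fibres at once.

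Part (2) also does not close. First, a $2$-dimensional fibre component is a divisor only when $m=2$; since $m\geq 3$ is allowed (the Moishezon case of Theorem 4.1), the paper must first show that $f^{-1}(f(N_0))$ is of pure codimension $1$, via purity of the branch locus --- a step you skip. Second, ``standard negativity of contracted divisors'' presupposes enough divisors of $Y$ through the bad point (or some K\"ahler-type positivity), which is unavailable when $Y$ is a non-algebraic torus with $a(Y)=0$. Third, and most seriously, the concluding step is not an argument: $[Z]\cup[F]=0$ in $H^{6}(X,\bZ)\simeq\bZ$ only says that the purely ``horizontal'' component of $[Z]\in\wedge^{2}H^{1}(X,\bZ)$ vanishes; it does not determine $[Z]$, and nothing in a non-K\"ahler threefold prevents $[Z]=0$ outright, so no contradiction with $Z$ being contracted is extracted. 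The paper instead flattens $f$ by Hironaka, applies the canonical bundle formula for elliptic threefolds to the flattened fibration, and reaches a contradiction from an identity of divisors in which the proper transform of the offending component occurs on one side only.
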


\begin{proof} %For the proof of (1) recall  that the set of critical
%values is a closed analytic subset of $Y$, the map $f$
%being proper.  
(1) (a) Let us first consider the case that $Y$ is projective. 
Then we take a general complete
intersection curve $C$  on $Y$, i.e., a complete
intersection of $m-1$ general hyperplanes  of $Y$. So by Bertini's
theorem, $C$ is a smooth curve and
$X_C = f^{-1}(C)$  is a smooth surface. Let $f_C : X_C \lra C$ be the
induced  morphism; then it suffices to show
that $f_C$ is a smooth morphism.  By the adjunction formula, by
$K_X = \sO_X$ and $K_Y = \sO_Y,$ we obtain
$$K_{X_C} = f_C^*(K_C), $$
i.e., $K_{{X_C}/C} = \sO_{X_C}$. Then the canonical
bundle formula for an elliptic surface (see eg.
\cite{BHPV04}, Page 213,  Theorem 12.3) gives the
smoothness of $f_C$.

(b) It remains to consider the case where $\dim\, Y = 2$ and $Y$ not projective. If $a(Y) = 0$, then
$Y$ has no complete curve and $f$ is smooth in
codimension $1$. If $a(Y) = 1$, then the algebraic reduction $a : Y \to C$ of $Y$
is a smooth elliptic fibration over an elliptic
curve $C$ and all curves on $Y$ are fibres of $a$. Thus  the 1-dimensional part of the critical values
form a normal crossing divisor and we can apply the canonical
bundle formula (\cite{Ue87},  Theorem 2.4, or
\cite{Fu86}, Theorem 2.15) to our elliptic $3$-fold
$f : X \to Y$. As a result, if the set of the critical values is {\it
not} of codimension $\ge 2$, then  there are
fibres $C_i$ ($1 \le i \le k$) of $a$ and positive integers $n_i$
and $M$ such that we have a bijection
$$\vert MK_X \vert \leftrightarrow
\vert f^*(MK_Y + \sum_{i=1}^{k} n_iC_i) \vert\, .$$ This is however
absurd, because the left hand side is an empty
set by
$K_X = \sO_X$, but the right hand side is a non-empty set since $K_Y = \sO_Y$ and
$n_i >0$.

This completes the proof of (1).

\vskip .2cm (2) To begin with, notice that equi-dimensionality and
flatness are equivalent, $X$ and $Y$ being smooth. We denote the union of all irreducible components of
dimension $\ge 2$ in the fibers of $f$ by $N_0.$ Assuming
$N_0 \ne \emptyset$
shall derive a
contradiction.
To do that, let
$$N = f^{-1}f(N_0). $$

First of all, $N$ must be of pure
codimension $1$ in $X$. In fact, otherwise take a
general small $m$-dimensional disk $\Delta$ centered  at a general
point $P$ of a 1-dimensional component of $N$. 
Then $\Delta $ dominates $Y$ at $f(P)$ and
$f\vert \Delta :  \Delta \to Y$ is a  generically
finite surjective morphism around $f(P)$ branched in
codimension $\ge 2$ on
$\Delta$. However this is impossible by the purity of the branch loci.
Thus $N$ is a divisor. \\
Choose an irreducible component $B$ of $N.$ \\
By Hironaka's flattening theorem (\cite{Hi75}, main result),
there is a successive sequence of blow-ups $\mu : \hat{Y} \to
Y$ such that  the induced morphism 
$$f_1 : X_1 :=  X \times_{Y} \hat{Y} \to \hat{Y}$$  is a flat
morphism. 
Let 
$$E'_i \  (1 \le i \le k) $$ 
be the exceptional divisors of  
$\mu : \hat{Y} \to Y$. Since flatness is preserved under base
change, we may assume that
$\sum_{i=1}^k E'_i$ is a normal crossing divisor, possibly performing
further blow-ups of $\hat{Y}$. Consider the
normalization $X_2 \to X_1$ of $X_1$ and perform a
resolution of singularities (\cite{Hi77}, main result)
of $X_2$, say $X_3 \to X_2$  and then finally take a 
resolution of indeterminacies (ibid.) of
$X \rh X_3$, say
$\pi : \hat{X} \to X$.  Let $\hat{f} : \hat{X} \to \hat{Y}$ be the
induced morphism.

Let $E_j$ ($1 \le j \le \ell$) be the exceptional divisors  of $\pi :
\hat{X} \to X$ and $\hat{B}$ be the proper
transform  of $B$ on $\hat{X}$. Since $B$ is of codimension $1$ on $X$, necessarily
$\hat{B} \not= E_j$ for any $j$. On the other hand, the fact that we have flattened $f$ means that$\hat{f}(\hat{B})$ is one of the
$E'_i$,  say $E'_1$.

We are going to apply the canonical bundle formula for $\hat{f}$ in
\cite{Ue87},  Theorem 2.4 (or \cite{Fu86}, Theorem 2.15).
Note that 
$$K_{\hat{X}} = \sum_{j=1}^{\ell} a_jE_j$$ 
with every $a_j >
0$ since $K_X = \sO_X$. For the same reason, 
$$K_{\hat{Y}} =
\sum_{i=1}^{k} b_iE'_i$$ 
with every $b_i > 0$. 
As $f$ is smooth in
codimension $1$, the discriminant divisor of
$\hat{f}$ is supported  in $\bigcup_{i=1}^{k} E'_k$. Thus for a large
multiple $M >0$, we obtain
$$M\sum_{j=1}^{\ell} a_jE_j = MK_{\hat{X}}$$
$$= \hat{f}^{*}(MK_{\hat{Y}} + \sum_{i=1}^{k} c_iE'_i) + D_1 - D_2  =
\hat{f}^{*}(\sum_{i=1}^{k} ((b_i + c_i) E'_i) + D_1
- D_2$$  where $D_1$ is an effective divisor such that no multiple of $D_1$ moves, 
 $D_2$ is an effective
divisor such that $\hat{f}(D_2)$  is of
codimension $\ge 2$, and each $c_i$ is a non-negative integer.
Notice $b_i + c_i > 0$ for all $i$.  Moreover, by
\cite{Ue87},  Theorem 2.4 (especially the statement (6) there), every element of
$\vert MK_X \vert$ is uniquely written, as a divisor, in the form  of
the sum of an element of
$$\vert \hat{f}^{*}(\sum_{i=1}^{k} (b_i + c_i) E'_i) \vert = \hat{f}^* \vert
\sum_{i=1}^{k} (b_i + c_i) E'_i \vert =
\{ \hat{f}^{*}(\sum_{i=1}^{k} (b_i + c_i) E'_i) \}$$  and the divisor
$D_1 - D_2$. Thus, we have the following equality
as a {\it divisor}:
$$M\sum_{j=1}^{\ell} a_jE_j + D_2 =
\hat{f}^{*}(\sum_{i=1}^{k} (b_i + c_i) E'_i) + D_1\,\, .$$ In this
equality of divisors, the prime divisor $\hat{B}$
appears  in the right hand side because $\hat{f}(\hat{B}) = E'_1$. But it
does not  appear in the left hand side, since - as already observed - 
$\hat{B}$ is neither $\pi$-exceptional  nor $\hat{f}(\hat{B})$ is of
codimension $\ge 2$ in $\hat{Y}$.  This
contradiction concludes the equi-dimensionality of $f$ and 
the flatness of $f$ as well.
\end{proof}

\begin{lemma}\label{lem3} The map $f: X \to Y$ is smooth.
\end{lemma}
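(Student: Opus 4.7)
By Lemma~\ref{lem2}, $f:X\to Y$ is flat and equi-dimensional with all smooth fibres isomorphic to a fixed elliptic curve $E$, and the closed discriminant locus $\Delta\subset Y$ (the points $y$ with $X_y$ singular) has codimension $\ge 2$. The goal is to show $\Delta=\emptyset$; I argue by contradiction.

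In the main case $m=\dim Y\le 2$ (the one needed for Theorem~\ref{main}), $\Delta$ is a finite set of points. The plan is to exploit additivity of the topological Euler characteristic under the stratification $X=f^{-1}(Y\setminus\Delta)\sqcup f^{-1}(\Delta)$. Over $Y\setminus\Delta$, the map $f$ is a smooth proper family of elliptic curves, hence locally trivial by Ehresmann, and since $e(E)=0$ this open piece contributes $0$ to $e(X)$. Each fibre $X_y$ with $y\in\Delta$ is a connected Gorenstein curve of arithmetic genus $1$ with at least one singular point; a direct stratification computation (nodal $I_1$ gives $e=1$; cuspidal $II$ gives $e=2$; an $n$-cycle $I_n$ gives $e=n$; the Kodaira $IV$ triple point gives $e=4$, and so on) always yields $e(X_y)\ge 1$. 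Therefore $e(X)=\sum_{y\in\Delta}e(X_y)\ge|\Delta|$. On the other hand, $X$ being homotopy equivalent to a torus of real dimension $2(m+1)$ forces $e(X)=0$, so $\Delta=\emptyset$.

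In the remaining case, $Y$ is a projective abelian variety by Lemma~\ref{lem1}, and I would pick $y_0\in\Delta$, translate $m-1$ general very ample divisors $H_1,\ldots,H_{m-1}$ so that they meet at $y_0$, and form the complete intersection curve $C=H_1\cap\cdots\cap H_{m-1}$. After a Bertini-type step arranging $X_C:=f^{-1}(C)$ to be a smooth surface, adjunction together with $K_X\equiv 0$ and $K_Y\equiv 0$ gives $K_{X_C}=f_C^{*}K_C$, and hence $K_{X_C/C}\equiv 0$. No vertical $(-1)$-curve can then exist, so $f_C:X_C\to C$ is relatively minimal, and Kodaira's canonical bundle formula forces $\chi(\sO_{X_C})=0$ together with the absence of multiple fibres. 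Noether then yields $e(X_C)=12\chi(\sO_{X_C})-K_{X_C}^2=0$, the square vanishing because $K_{X_C}$ is pulled back from the curve $C$. This contradicts the strictly positive contribution $e(X_{y_0})\ge 1$ from the singular fibre over $y_0$. \textbf{The main obstacle} in this general case is precisely the Bertini step: ensuring that $X_C$ is smooth at every point of $X_{y_0}$ requires a local analysis showing the fibres of $f$ have embedding dimension $\le 2$ at each of their points, so that a generic $C$ through $y_0$ meets the critical locus of $f$ transversally. In the case $m=2$ needed for Theorem~\ref{main}, the Euler-characteristic argument of the previous paragraph bypasses this difficulty entirely.
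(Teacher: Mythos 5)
Your strategy is genuinely different from the paper's: the paper first proves that every fibre is \emph{reduced} (via purity of the branch locus) and then writes the fibration locally in Weierstrass form $y^2=x^3+a(t)x+b(t)$ near a general point of a fibre, so that the critical locus $\{4a(t)^3+27b(t)^2=0\}$ is either empty or a divisor in $Y$; Lemma \ref{lem2}(1) then forces it to be empty, uniformly in $m$. Your Euler-characteristic count is instead close in spirit to the paper's own Remark following Lemma \ref{lem3} (which uses $c_3(X)=\chi_{\rm top}(X)=0$), but it contains a genuine gap. The inequality $e(X_y)\ge 1$ for $y\in\Delta$ is computed on the underlying topological space, i.e.\ on $(X_y)_{\rm red}$, and it fails precisely for a multiple fibre $X_y=kE$ with $E$ a smooth elliptic curve: such a fibre is a point of non-smoothness of $f$, so $y\in\Delta$, yet $e(X_y)=e(E)=0$ and the fibre is invisible to the additivity computation. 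Your assertion that every fibre over $\Delta$ ``has at least one singular point'' is justified only for reduced fibres (flatness plus smoothness of the scheme-theoretic fibre would make $f$ smooth there); for non-reduced fibres it is false at the level of the reduction. So before counting Euler numbers you must show the fibres over $\Delta$ are reduced. This is exactly the first step of the paper's proof: at a general point $P$ of a non-reduced component one takes a general $m$-dimensional disk $D$ transverse to the fibre; $f\vert_D$ is then a generically finite cover of degree $\ge 2$ near $f(P)$, genuinely branched at $P$, but with branch locus of codimension $\ge 2$ by Lemma \ref{lem2}(1) --- contradicting purity of the branch locus. With that step inserted, your count $0=e(X)=\sum_{y\in\Delta}e(X_y)\ge \vert\Delta\vert$ does close the case $m=2$: for a reduced connected curve of arithmetic genus $1$ one has $e\ge\delta\ge 1$ as soon as it is singular, with no need to invoke Kodaira's list (which in any case only classifies fibres of relatively minimal elliptic surfaces, not fibres over an isolated point of a surface).

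For $m\ge 3$ you concede that the Bertini step is an obstacle, so that branch of your argument is incomplete as written; moreover even granting it, the Noether-formula computation again only detects singular fibres with nonzero Euler number, so the same reducedness issue recurs. The paper's local Weierstrass argument sidesteps both problems: once the fibres are reduced, $f$ admits a local section near a general point of any fibre, and the resulting discriminant is a divisor in $Y$ unless empty, which is incompatible with smoothness in codimension one.
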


\begin{proof} Let $y \in Y$ be any point of $Y$ and suppose that the fiber $X_y$ is singular.
If $X_y$ has a non-reduced component $C$, necessarily of dimension 1 by Lemma \ref{lem2}, 
we choose  an $m$-dimensional general disk
$\Delta$ centered at a general non-reduced  point $P \in C$. Then $f \vert \Delta : \Delta \to Y$  is a generically
finite surjective morphism around $f(P)$ whose branch
locus in $\Delta$ is of codimension $\ge 2$ (since $f$ is smooth in
codimension $1$),  a contradiction to the purity of
branch loci. \\
Hence $X_y$ is reduced.
Now take a local section
$D$ at a general point of $X_y$. Once we have chosen $D$, we can describe the fibration
by the Weierstrass equation locally near $y$:
$$y^2 = x^3 +a(t)x + b(t)$$ where $a(t)$, $b(t)$ are holomorphic
functions around $y$.  Then the critical locus of
the original $f$ around $y$ is given by the equation$$4a(t)^3 +
27b(t)^2 =0\, .$$ In particular, it is of pure
codimension $1$ on $Y$ unless it is empty.  As $f$ is smooth in
codimension $1$, it follows that the critical locus
is empty, that is, $f$ is smooth.
\end{proof} Finally we can apply Theorem \ref{thm2} to conclude Theorem
\ref{thm3}.

\hfill Q.E.D. for Theorem \ref{thm3}.

\begin{remark} {\rm   In case $\dim X = 3,$ Lemma \ref{lem3} can be proved without using
 Weierstrass models in the following way.
So we suppose that we already as shown in the first part of the proof of Lemma 4.5
 without using the Weierstrass normal form that
$f$ can has finitely many singularities, say $x_1, \ldots, x_N.$ 
Choose a general holomorphic 1-form  
$\omega$ on $Y$. Then $f^*(\omega)$ vanishes exactly at $x_1, \ldots, x_N$, 
hence $c_3(\Omega^1_X) > 0.$ But $c_3(X) = \chi_{\rm top}(X) = 0,$   
since $X$ is homotopy equivalent to a torus. }  
\end{remark}

\par
\vskip 4pt
\section{A characterization of complex tori -  the case fibred over a curve}
\setcounter{lemma}{0}
\noindent In this section we shall prove the following:
\begin{theorem}\label{thm4} Let $X$ be a compact complex manifold
such that
\begin{enumerate}
\item $X$ is homotopy equivalent to a
complex torus of dimension $3$;
\item  there is a dominant
meromorphic map $f : X \rh Y$ to a smooth compact curve;
\item  $\sO_X(K_X ) \cong  \sO_X$.
\end{enumerate}
Then $X$ is biholomorphic to a complex torus.
\end{theorem}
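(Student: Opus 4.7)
The plan is to verify the hypotheses of Theorem~\ref{thm2}: after possibly passing to a bimeromorphic model, $f$ should be a holomorphic submersion with connected fibres onto a complex torus, with a torus fibre.

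Take a resolution of indeterminacies $\nu : \tilde X \to X$ yielding a morphism $\tilde f : \tilde X \to Y$, and after Stein factorization assume $\tilde f$ has connected fibres. Since blow-ups preserve $\pi_1$ and $b_1$, $\pi_1(\tilde X) = \bZ^6$. Proposition~\ref{prop1} applied to $\tilde f$ gives $b_1(F) + b_1(Y) \geq 6$ for a general fibre $F$; the quotient $\pi_1(Y)$ of $\bZ^6$ is abelian, so $g(Y) \leq 1$. Writing $K_{\tilde X} = \nu^{*}K_X + R = R$ for the $\nu$-exceptional effective divisor $R$, adjunction gives $K_F = R|_F$ for a general $F$. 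The $R$-components are $\bP^1$-bundles over curves in $X$, so $R|_F$ decomposes into disjoint $(-1)$-curves on $F$; hence $F$ is a blow-up of a minimal smooth surface $F_0$ with $K_{F_0} = 0$ and $b_1(F_0) = b_1(F) \geq 4$, whence $F_0$ is abelian by Enriques--Kodaira and $b_1(F) = 4$. This forces $g(Y) = 1$ (since $g(Y) = 0$ would require $b_1(F) \geq 6$), so $Y = E$ is an elliptic curve. A further canonical-bundle-formula analysis using $K_X = 0$ shows that in fact no $R$-component dominates $E$: the contribution of such a component to $K_F$ would make $F$ a proper blow-up of an abelian surface, and the resulting $\bP^1$-bundle structure combined with the abelianness of $\pi_1(X)$ and the triviality of $K_X$ is incompatible. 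Hence the original $f : X \to E$ is already holomorphic, $K_F = \sO_F$, and $F$ is an abelian surface.

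Next I show that $f$ is a submersion. Flatness of $f$ is automatic over the smooth curve $E$, and the canonical bundle formula excludes multiple fibres: every nonnegative summand in $K_X = f^{*}(K_E + M) + \sum (m_i - 1) D_i$ must vanish, since $K_X = K_E = 0$. Using Euler-characteristic additivity
\[
\chi_{\mathrm{top}}(X) = \chi_{\mathrm{top}}(E)\,\chi_{\mathrm{top}}(F) + \sum_{y \in \Sigma}\bigl(\chi_{\mathrm{top}}(X_y) - \chi_{\mathrm{top}}(F)\bigr)
\]
together with the vanishings $\chi_{\mathrm{top}}(X) = 0$ (torus homotopy type), $\chi_{\mathrm{top}}(E) = 0$, $\chi_{\mathrm{top}}(F) = 0$, I obtain $\sum_{y}\chi_{\mathrm{top}}(X_y) = 0$. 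Isolated surface singularities contribute $\chi_{\mathrm{top}}(F) - \chi_{\mathrm{top}}(X_y) = \sum_p \mu_p > 0$ by the Milnor-number formula, so no singular fibre has only isolated singularities; non-isolated singularities are ruled out by combining $\omega_{X/E} \cong \sO_X$ (Gorenstein fibres with trivial dualising sheaf) with the abelianness of $\pi_1(X) = \bZ^6$, which forces trivial local monodromy and hence trivial vanishing cycles. Therefore $\Sigma = \emptyset$ and $f$ is a submersion.

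Finally, $f : X \to E$ is a holomorphic submersion with connected abelian-surface fibres, and Theorem~\ref{thm2} applied with $m = 1$, $n = 2$ concludes that $X$ is biholomorphic to a complex torus. The technically most delicate point is the exclusion of singular fibres with one-dimensional singular locus, where neither the Milnor-number count nor the pointwise Chern-class argument of the Remark at the end of Section~4 applies cleanly; one must combine the triviality of $\omega_{X/E}$ with the abelianness of $\pi_1(X)$ through a local Picard--Lefschetz analysis (e.g.\ on the model $f = z_1 z_2$ for two fibre components meeting transversely along a curve) to rule them out.
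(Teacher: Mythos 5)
Your overall skeleton (reduce to a holomorphic submersion onto an elliptic curve with $2$-torus fibres, then invoke Theorem \ref{thm2}) matches the paper's, but both of the hard steps contain genuine gaps. First, the case $Y=\bP^1$: from $K_{\tilde X}=R$ with $R$ $\nu$-exceptional you conclude that $K_F=R|_F$ is a disjoint union of $(-1)$-curves and hence that $F$ is a blown-up $K$-trivial surface. This is unjustified: the exceptional divisors of a resolution of indeterminacy of a map to a curve lie over the (at most one-dimensional) indeterminacy locus, their intersections with a general fibre $F$ need not be contracted by $\nu|_F$ at all (they can map onto curves of the indeterminacy locus), and nothing forces them to be disjoint $(-1)$-curves; in particular $\kappa(F)\in\{1,2\}$ is not excluded by your argument. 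The paper handles exactly this case by taking the relative pluricanonical map over $\bP^1$ to obtain a dominant meromorphic map onto a projective variety of dimension $\ge 2$ and then invoking Theorem \ref{thm3}; some such reduction (or a genuine replacement) is needed before you may assume $Y$ is elliptic with torus fibres.

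Second, the smoothness of $f$ over the elliptic curve. Your Euler-characteristic count only detects fibres whose singularities are isolated; it is blind to multiple fibres with smooth reduction (which contribute $0$ to $\chi_{\rm top}$) and, as you yourself concede, to fibres with one-dimensional singular locus --- which includes every reducible fibre. The appeal to ``trivial local monodromy hence trivial vanishing cycles'' via the abelianness of $\pi_1(X)$ does not work: for a fibration by surfaces the vanishing cycles live in $H_2$ of the fibre, while abelianness of $\pi_1(X)$ only controls the action on $\pi_1(F)$ and $H_1(F)$. Likewise the canonical bundle formula $K_X=f^{*}(K_E+M)+\sum(m_i-1)D_i$ is not available off the shelf for a non-K\"ahler threefold fibred by abelian surfaces. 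The paper's actual mechanism is quite different and is the real content of the section: Proposition \ref{prop5} proves, by a Mayer--Vietoris computation on the base change to the universal cover of $E$, that every fibre is irreducible and that a resolution $\tilde Z$ of the reduction of any fibre admits a surjection $\pi_1(\tilde Z)\to\bZ^{4}$ with $\wedge^{4}H^{1}$ injective; then adjunction gives $\omega_{Z_{\rm red}}^{\otimes m}\cong\sO_{Z_{\rm red}}$, hence $\omega_{\tilde Z}^{\otimes m}\cong\sO_{\tilde Z}(-D)$ with $D\ge 0$, and surface classification forces $\tilde Z=Z_{\rm red}$ to be a $2$-torus and $m=1$. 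You would need to supply an argument of comparable strength to rule out multiple and reducible fibres before Theorem \ref{thm2} can be applied.
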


Observe that in condition (2), taking the Stein factorization, 
we may assume $f$ to have connected fibers. \\

We  start with
\begin{lemma}\label{lem4}  Let $X$ be a compact complex manifold subject to the
assumptions in  Theorem \ref{thm4}. Then $X$ is biholomorphic to  a complex torus of
dimension $3$  or $Y$ is an elliptic curve, $f$ is a
holomorphic map and the general smooth fibre of $f$ is a
complex torus of dimension $2$.
\end{lemma}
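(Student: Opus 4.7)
The plan is to argue by contradiction: assume $X$ is not biholomorphic to a complex torus, and deduce that $Y$ is an elliptic curve, that $f$ is holomorphic, and that the general smooth fibre of $f$ is a $2$-dimensional complex torus. As a preparatory step, resolve the indeterminacy of $f$ by a sequence of blow-ups with smooth centres, $\pi\colon\tilde X\to X$, producing a holomorphic map $\tilde f\colon\tilde X\to Y$; after Stein factorizing $\tilde f$ (which replaces $Y$ by a finite cover, still a smooth compact curve) we may assume $\tilde f$ has connected fibres. Since blow-ups of smooth centres do not alter $\pi_1$, we have $\pi_1(\tilde X)\cong\pi_1(X)\cong\bZ^{6}$. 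Applying Proposition \ref{prop1} to $\tilde f$ gives that $\pi_1(Y)$ is abelian, which for a compact smooth curve forces $g(Y)\le 1$, and simultaneously yields the Betti bound $b_1(F)+b_1(Y)\ge 6$ for the general smooth fibre $F$ of $\tilde f$.

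The crux is to exclude the case $Y\cong\bP^{1}$. I would split according to the algebraic dimension $a(X)\in\{0,1,2,3\}$. The extreme values are straightforward: $a(X)=0$ is inconsistent with the very existence of the meromorphic function $f\colon X\dasharrow\bP^{1}$, while $a(X)=3$ makes $X$ Moishezon, hence biholomorphic to an abelian variety by Corollary \ref{cor1}. If $a(X)=2$, the algebraic reduction is a dominant meromorphic map from $X$ to a (projective) surface, and Theorem \ref{thm3} with $m=2$ directly yields that $X$ is a torus. The delicate case is $a(X)=1$: the algebraic reduction $r\colon X\dasharrow A$ has $A$ a smooth compact curve, and since $f$ is a meromorphic function on $X$ it factors through $r$; connectedness of the fibres of $f$ forces the induced finite morphism $A\to\bP^{1}$ to have degree one, so $A\cong\bP^{1}$ and $f=r$. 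Then the general smooth fibre $F$ of $\tilde f$ is bimeromorphic to a general fibre of $r$, hence $a(F)=0$; but by the Enriques--Kodaira classification, any compact complex surface with $a=0$ satisfies $b_1\le 4$, contradicting $b_1(F)\ge 6$.

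Once $Y$ is known to be an elliptic curve, the holomorphicity of $f$ follows from the classical fact (cf.\ \cite{Ue75}) that a meromorphic map from a compact complex manifold into a complex torus has empty indeterminacy locus -- it is determined by integrals of the $1$-forms pulled back from the torus, which extend holomorphically by Hartogs. With $f\colon X\to Y$ now holomorphic, the general smooth fibre $F$ has trivial normal bundle, so by adjunction $K_F\cong K_X|_F\cong\sO_F$; combined with $b_1(F)\ge 4$ from Proposition \ref{prop1}, the Kodaira classification of compact complex surfaces with trivial canonical bundle (tori with $b_1=4$, K3 with $b_1=0$, primary Kodaira surfaces with $b_1=3$) identifies $F$ as a $2$-dimensional complex torus.

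The principal obstacle is the $a(X)=1$ branch of the case analysis above: one must simultaneously arrange the factorization of $f$ through the algebraic reduction, argue that connectedness of the fibres of $f$ pins the target down to be $\bP^{1}$, and invoke the classification input that a surface with algebraic dimension zero has $b_1\le 4$. The remaining ingredients are essentially bookkeeping combining Proposition \ref{prop1}, Ueno-type extension of maps to tori, and adjunction.
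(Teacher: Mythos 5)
Your overall skeleton matches the paper's: Proposition \ref{prop1} forces $\pi_1(Y)$ to be abelian, hence $Y$ elliptic or $Y\cong\bP^1$, and your treatment of the elliptic case (holomorphicity of meromorphic maps to tori, adjunction giving $K_F\cong\sO_F$, then $b_1(F)\ge 4$ plus surface classification) is exactly the paper's. The divergence, and the gap, is in how you exclude $Y\cong\bP^1$. The subcases $a(X)=0,2,3$ are fine, but the $a(X)=1$ branch rests on the assertion that the general fibre $F$ of the algebraic reduction $r\colon X\rh\bP^1$ satisfies $a(F)=0$. This does not follow from $F$ being a fibre of the algebraic reduction, and it is false in general: meromorphic functions on the fibres need not spread out to meromorphic functions on the total space. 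Already in dimension two an elliptic K3 surface $S$ with $a(S)=1$ has algebraic reduction $S\to\bP^1$ whose general fibre is an elliptic curve with $a=1$; in dimension three a generic extension of tori $0\to A\to X\to E\to 0$ with $A$ an abelian surface gives $a(X)=1$ with algebraic reduction $X\to E$ whose fibres are abelian surfaces with $a=2$. So you cannot conclude $b_1(F)\le 4$ by this route, and without that bound the contradiction with $b_1(F)\ge 6$ evaporates. (Note also that this branch of your argument nowhere uses the hypothesis $K_X\equiv 0$, which should itself raise suspicion.)

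The paper closes the $Y\cong\bP^1$ case by a dichotomy on $\kappa(F)$ rather than on $a(X)$: since $K_X\equiv 0$, the canonical divisor $K_{\tilde X}$ of the resolution is effective, so adjunction gives $\kappa(F)\ge 0$. If $\kappa(F)\ge 1$, the relative pluricanonical map over $\bP^1$ produces a dominant meromorphic map onto a projective variety $Z$ with $\dim Z\ge 2$, and Theorem \ref{thm3} (or Corollary \ref{cor1} when $\dim Z=3$) shows $X$ is a torus. If $\kappa(F)=0$, the classification of surfaces with $\kappa=0$ gives $b_1(F)\le 4$, contradicting $b_1(F)+b_1(\bP^1)\ge 6$ from Proposition \ref{prop1}. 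Replacing your algebraic-dimension case division on $X$ by this Kodaira-dimension case division on the fibre repairs the argument.
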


\begin{proof} By Proposition \ref{prop1}, $\pi_1(Y)$ is an abelian
group.  Hence $Y$ is either an elliptic curve or
$\bP^1$.

Consider first the case that $Y$ is an elliptic curve, so $f$ is holomorphic. Let $F$  be a general fibre of $f$. Then $K_F = \sO_F$ by the adjunction formula.  Since 
$$b_1(F) + b_1(Y)
\ge b_1(X) = 6 $$ 
by Proposition \ref{prop1}, it follows from the
classification  of compact complex surfaces with
$\kappa = 0$ 
(see e.g. \cite{BHPV04}, p.244, Table 10)  
that $b_1(F)
= 4$  and $F$ is a complex torus of dimension
$2$ so that we are done. \\
In case $Y = \bP^1$ let $\tilde{f} : \tilde{X} \to
Y$  be a resolution of indeterminacies of $f$
%(\cite{Hi77})
and let $F$ be a  general fibre  of $\tilde{f}$. Then $F$
is smooth and $\kappa(F) \ge 0$ by the
adjunction formula. \\
If $\kappa(F) \ge 1$, then we can take a relative pluri-canonical map
$\varphi : X \rh Z$ from $X$ over $Y$ (\cite{Ue75}, Theorem 12.1 and
its proof). As $Y$ is projective and $Z$ is
projective over
$Y$, it follows that $Z$ is projective. We have also $\dim\, Z \ge
2$. Thus $X$ is biholomorphic to a complex torus by
Theorem \ref{thm3}. \\
If $\kappa(F) = 0$, then $b_1(F) \le 4$ again by classification  of
compact complex surfaces with $\kappa = 0$. 
Then however
$$b_1(F) + b_1(Y) \le 4 < 6 = b_1(X) = b_1(\tilde{X})\, ,$$
contradicting Proposition \ref{prop1}. \\
This completes the proof.
\end{proof}

>From now on we may assume that we have a surjective holomorphic map $f : X \to Y$
over an elliptic curve $Y$ with connected fibres.

The next two propositions of more topological nature will be
applicable to many other situations:

\begin{proposition}\label{prop4} 
Let $X$ resp. $Y$ be a topological space  which is
homotopy equivalent to  a real torus $A$ of real dimension $N$ resp. homotopy
equivalent to a real torus $B$ of real dimension $r$.
Let $f : X \to Y$ be a continuous
map which
is dominant in the sense that $f^* : H^r(Y, \bZ)
\to H^r(X, \bZ)$ is non-zero. Let $u : \hat{Y} \to Y$ be  the             
universal covering map and $\hat{X} = X \times_{Y}
\hat{Y}$ be the fibre  product. Then $\hat{X}$ is homotopy equivalent to
a real torus of real dimension $N-r$, in particular
we have $H^{N-r}(\hat{X}, \bZ) = \bZ$.
\end{proposition}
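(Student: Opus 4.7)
The plan is to recognize $\hat{X}$ as an Eilenberg--MacLane space by identifying its fundamental group, exploiting the fact that real tori are aspherical. The argument splits into three steps: realize $\hat{X}$ as a covering of $X$, determine the corresponding subgroup of $\pi_1(X)$ via the cohomological hypothesis, and transport the asphericity of $X \simeq A$ to the cover.

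First I would observe that, since $Y \simeq B = T^r$ is aspherical, $\hat{Y}$ is contractible, and the pullback $\pi : \hat{X} \to X$ of the covering $u$ is again a covering. Standard covering-space theory identifies the subgroup of $\pi_1(X, x_0)$ corresponding to the component of $\hat{X}$ through a chosen basepoint with $K := \ker(f_* : \pi_1(X) \to \pi_1(Y))$. Next I would pin down $K$ cohomologically: since both $H^*(X, \bZ)$ and $H^*(Y, \bZ)$ are exterior algebras on $H^1$, the map $f^* : H^r(Y, \bZ) \to H^r(X, \bZ)$ is the $r$-th exterior power of $f^* : H^1(Y, \bZ) \to H^1(X, \bZ)$. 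Its non-vanishing forces $f^*\alpha_1, \ldots, f^*\alpha_r$ (for $\alpha_i$ a $\bZ$-basis of $H^1(Y, \bZ)$) to be $\bQ$-linearly independent in $H^1(X, \bZ)$, so dually $f_* : H_1(X) \to H_1(Y)$ has image of full rank $r$. Identifying $H_1$ with $\pi_1$ on both sides (both abelian), $K$ is a rank-$(N-r)$ subgroup of $\bZ^N$, hence $K \cong \bZ^{N-r}$.

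For the last step, asphericity of $X \simeq T^N$ is inherited by any covering space, so the connected component of $\hat{X}$ in question is a $K(\bZ^{N-r}, 1)$, which is homotopy equivalent to $T^{N-r}$. The main obstacle left to address is \emph{connectivity}: a priori $\hat{X}$ has $[\pi_1(Y) : f_*\pi_1(X)]$ components, and the assertion $H^{N-r}(\hat{X}, \bZ) = \bZ$ requires this index to be $1$. The cleanest way I see to handle this is to replace $Y$ at the outset by the finite torus cover corresponding to $f_*\pi_1(X) \subset \pi_1(Y)$, through which $f$ automatically factors; after this reduction $\hat{X}$ becomes connected and is a single torus $T^{N-r}$, yielding both the homotopy equivalence and the cohomological statement.
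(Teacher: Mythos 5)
Your proof is correct and follows essentially the same route as the paper: both deduce from $(f_r)^* = \wedge^r (f_1)^*$ that $f_* : \pi_1(X) \to \pi_1(Y)$ has image of full rank $r$, pass to the finite cover of $Y$ corresponding to $f_*\pi_1(X)$ so that $\hat{X}$ becomes connected, and identify $\hat{X}$ with the covering of $X$ belonging to $\ker f_* \cong \bZ^{N-r}$, hence homotopy equivalent to $T^{N-r}$. The only cosmetic difference is in the last step, where the paper realizes this cover explicitly as $\bR^N/\ker f_* \cong \bR^r \times T^{N-r}$ via the splitting of $\pi_1(X)$, while you invoke asphericity of coverings and uniqueness of $K(\bZ^{N-r},1)$'s; your explicit observation that connectedness of $\hat{X}$ genuinely requires the reduction to surjective $f_*$ is a point the paper passes over more quickly.
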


\begin{proof} Notice that we have  natural isomorphisms
$\pi_1(X) \simeq H_1(X, \bZ)$ and
$\pi_1(Y) \simeq H_1(Y, \bZ)$ and - by our assumptions - they are 
isomorphic as abstract groups to $\bZ^N$ and $\bZ^r$, respectively. 
%This is because $X$ and $Y$ are homotopy equivalent to real tori of
%real dimensions $N$ and $r$ respectively.      
Let us
consider the homomorphism
$$f_  : \pi_1(X) \simeq \bZ^N \to \pi_1(Y) \simeq \bZ^r\,\, ,$$
induced by $f$. Under the  above isomorphisms, this
homomorphism is  the same as the homomorphism
$$f_* : H_1(X, \bZ) \simeq \bZ^N \to H_1(Y, \bZ) \simeq \bZ^r\,\, .$$
The dual homomorphism 
$$(f_1)^{*} : H^1(Y, \bZ) \simeq \bZ^r \to H^1(X, \bZ) \simeq
\bZ^N\,\, $$ is a part of the homomorphism of algebras given by the
pullback
$$f^{*} : H^*(Y, \bZ) \to H^*(X, \bZ). $$
Again since $X$ and $Y$ are homotopy equivalent to
real tori of dimensions $N$ and $r$ respectively,
we know that
$$\wedge^{r} H^1(Y, \bZ) \simeq H^{r}(Y, \bZ) \simeq \bZ\,\, ,\,\,
\wedge^{r} H^1(X, \bZ) \simeq H^{r}(X, \bZ)\,\, ,$$ and the natural
homomorphism
$$(f_r)^* : H^r(Y, \bZ) \simeq \bZ \to H^r(X, \bZ)$$  is simply
$\wedge^{r} (f_1)^{*}$. As this is not zero by
assumption,  it follows that
$(f_1)^{*}$ is injective. Hence $f_* :\pi_1(X) \to \pi_1(Y)$ is
surjective up to a finite cokernel.

Now $f$ factors through the  finite unramified cover $Y'$ of $Y$ corresponding
to $f_*(\pi_1(X))$, whence we may  replace $Y$ by $Y'$ and
assume that $f_*$ is surjective.

We have then
${\rm Ker}\, f_* \simeq \bZ^{N-r}$ and $\pi_1(X)$ splits as
$$\pi_1(X) \simeq \bZ^N \simeq \bZ^r \oplus \bZ^{N-r}
\simeq \pi_1(Y) \oplus {\rm Ker}\, f_{*}\,\, .$$  Hence under the
universal covering maps $u_B : \bR^r \to B$
(corresponding to
$\pi_1(Y) \simeq \pi_1(B)$) and $u_A : \bR^N \to A$ (corresponding to
$\pi_1(X) \simeq \pi_1(A)$), it follows that $X'$ is homotopy equivalent  to
$$\bR^{N}/{\rm Ker}\, f_{*} \simeq
\bR^{r} \times (\bR^{N-r}/\bZ^{N-r}) \simeq \bR^{r} \times
T^{N-r}\,\, ,$$ where $T^{N-r}$ is a 
real torus of dimension $N-r.$ The result is now obvious.
\end{proof}

\begin{proposition}\label{prop5}  Let $X$ be a compact complex
manifold homotopy equivalent to a  complex torus of
dimension $n+1$ and let $Y$ be an elliptic curve and let $f : X \to Y$ be a
surjective holomorphic  map with connected
fibres. \\
Then all analytic sets $f^{-1}(y)$  ($y \in Y$) are irreducible.
Moreover,  if $Z$ is the
reduction of a singular fiber and $\tilde{Z}$ is a resolution of
singularities  of $Z$, then there is a real torus of
real dimension
$2n$ and a surjective differentiable map
$\rho : \tilde{Z} \to T^{2n}$ such that the induced homomorphism
$$\rho_* : \pi_1(\tilde{Z}) \to \pi_1(T^{2n})$$ is surjective.
\end{proposition}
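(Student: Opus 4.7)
The starting point is Proposition~\ref{prop4}. Apply it with $N=2(n+1)$ and $r=2$; the hypothesis that $f^{*}:H^{2}(Y,\bZ)\to H^{2}(X,\bZ)$ is non-zero is automatic because $f$ is a surjective holomorphic map onto the oriented compact surface $Y$ (a smooth fiber is a non-empty compact complex analytic subset, hence has a non-zero fundamental class, and the pullback of the fundamental class of $Y$ pairs non-trivially with it). One concludes that $\hat X:=X\times_{Y}\bC$ is homotopy equivalent to a real torus of real dimension $2n$; in particular $\pi_1(\hat X)\cong\bZ^{2n}$, $H_{2n}(\hat X,\bZ)=\bZ$, and $\hat X$ is aspherical. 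The pulled-back map $\hat f:\hat X\to\bC$ is proper and has the same fibers as $f:X\to Y$.

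For the irreducibility assertion, write $(f^{-1}(y))_{\rm red}=Z_1\cup\cdots\cup Z_\ell$ and $X_y=\sum_{i=1}^{\ell}m_iZ_i$. Each $Z_i$, viewed after lifting as a compact complex subvariety of $\hat X$ of complex dimension $n$, carries a fundamental class $[Z_i]\in H_{2n}(\hat X,\bZ)=\bZ$ which is a positive integer with respect to the orientation induced by the complex structure, and fibers of the proper family $\hat f$ are all homologous: $\sum m_i[Z_i]=[F]$, where $F$ is a smooth fiber. The main obstacle --- and the principal difficulty of the proposition --- is to show that this forces $\ell=1$. The natural plan is to identify $[F]$ with the image of the generator of $H^{2}_{c}(\bC,\bZ)$ under the proper pullback $\hat f^{*}:H^{2}_{c}(\bC,\bZ)\to H^{2}_{c}(\hat X,\bZ)\cong H_{2n}(\hat X,\bZ)$ (via Poincar\'e duality on the non-compact oriented $\hat X$) and to prove that this edge morphism of the Leray spectral sequence of $\hat f$ is an isomorphism, exploiting the asphericity $\hat X\simeq T^{2n}$ to control the higher derived pushforwards $R^{q}\hat f_{*}\bZ$. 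Once $[F]$ equals the generator, the equation $\sum m_id_i=1$ with $m_i\geq 1$, $d_i:=[Z_i]\geq 1$ forces $\ell=1$ (and also $m_1=d_1=1$).

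For the second assertion, write $X_y=m\cdot Z$ with $Z$ irreducible and let $\pi:\tilde Z\to Z$ be a resolution. First I would show that $H:=\im\bigl(\pi_1(\tilde Z)\to\pi_1(\hat X)\bigr)\subset\bZ^{2n}$ is a subgroup of finite index. By Proposition~\ref{prop1} applied to $f$, the image of $\pi_1(F)$ has finite index in $\pi_1(\hat X)$; and the Clemens deformation retract of a neighborhood $\hat f^{-1}(\Delta)$ of $X_y$ onto $X_y$ factors the inclusion $F\hookrightarrow\hat X$ through $Z\hookrightarrow\hat X$ up to homotopy, so that the image of $\pi_1(\tilde Z)$ in $\pi_1(\hat X)$ contains that of $\pi_1(F)$. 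Set $T^{2n}:=\bR^{2n}/H$, a real torus of real dimension $2n$. Choose closed $1$-forms $\omega_1,\ldots,\omega_{2n}$ on $\hat X$ whose periods on generators of $\pi_1(\hat X)$ form the standard basis of $\bR^{2n}$; integration produces a differentiable classifying map $\hat X\to\bR^{2n}/\bZ^{2n}$. The map $\tilde Z\to\hat X$ lifts to the finite \'etale cover $\hat X_H\to\hat X$ associated to $H$, and composition with the induced classifying map $\hat X_H\to\bR^{2n}/H=T^{2n}$ yields the required $\rho:\tilde Z\to T^{2n}$.

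By construction $\rho_{*}:\pi_1(\tilde Z)\to\pi_1(T^{2n})=H$ is surjective. Surjectivity of $\rho$ as a continuous map follows because its degree, viewed as a map of closed oriented $2n$-manifolds, is the image of $[\tilde Z]$ under the isomorphism $H_{2n}(T^{2n},\bZ)=\bZ$, and this image is non-zero --- in fact positive --- by the positivity of the complex fundamental class of $Z$ in the cover $\hat X_H$. The construction of $\rho$ and verification of its properties are thus essentially formal once irreducibility is in hand; the real work, and the place where the argument requires genuine ideas, is concentrated in establishing the irreducibility of the fibers.
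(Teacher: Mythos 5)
Your setup (pass to $\hat X = X\times_Y\bC$ and invoke Proposition \ref{prop4} to conclude $\hat X$ is homotopy equivalent to $T^{2n}$, hence $H^{2n}(\hat X,\bZ)\cong\bZ$) coincides with the paper's, but the central claim --- irreducibility of the fibres --- is not actually proved. You reduce it to showing that the class of a smooth fibre generates $H_{2n}(\hat X,\bZ)$ and then announce ``the natural plan'' of proving that a Leray edge morphism is an isomorphism, without carrying it out; that is precisely the hard step. Even granting it, the deduction $\sum m_i d_i=1\Rightarrow\ell=1$ requires each $d_i=[Z_i]$ to be a \emph{positive} integer, and positivity of fundamental classes of compact subvarieties is not automatic in the non-compact, non-K\"ahler manifold $\hat X$: there is no K\"ahler form to integrate against, and a priori a component could be null-homologous or have negative class. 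The paper sidesteps both difficulties: it deformation-retracts $\hat X$ onto the preimage of a contractible tree $\Gamma$ through the critical values and runs a Mayer--Vietoris computation which yields $H^{2n}(F_{\rm sing},\bZ)\cong H^{2n}(F_0,\bZ)\cong\bZ$ for every singular fibre; since for a compact connected complex space of pure dimension $n$ the rank of $H^{2n}$ equals the number of irreducible components, irreducibility follows at once. You need an argument of this concrete kind rather than an unexecuted spectral-sequence plan.

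There is a second gap in your construction of $\rho$. The Clemens retraction of $\hat f^{-1}(\Delta)$ onto the singular fibre shows that $\mathrm{Im}\bigl(\pi_1(F)\to\pi_1(\hat X)\bigr)\subset\mathrm{Im}\bigl(\pi_1(Z)\to\pi_1(\hat X)\bigr)$, i.e.\ it controls $\pi_1(Z)$, not $\pi_1(\tilde Z)$. Since $Z$ need not be normal, the resolution $\tilde Z\to Z$ need not be surjective on fundamental groups (a nodal curve and its normalization already show this), so your subgroup $H=\mathrm{Im}\bigl(\pi_1(\tilde Z)\to\pi_1(\hat X)\bigr)$ is not yet known to have finite index, and $\bR^{2n}/H$ is not yet known to be a compact torus of dimension $2n$. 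The paper closes exactly this gap by first producing the surjection $\pi_1(Z)\to\bZ^{2n}$ and the associated map $Z\to T^{2n}$, and then using $H^{2n}(\tilde Z,\bZ)\cong H^{2n}(Z,\bZ)\cong H^{2n}(T^{2n},\bZ)$ --- available once irreducibility is known --- to see that the composite $\tilde Z\to T^{2n}$ is still dominant, whence the required surjectivity on fundamental groups.
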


\begin{proof} As $f$ is proper holomorphic and $Y$ is compact,  the
critical values of $f$ consist of finitely many
points, say
$$B := \{b_1, b_2, \ldots , b_k \}\,\, .$$ Let $u : \bC \to Y$ be the
universal cover of $Y$. We consider the fiber product
$\hat{X} = X \times_{Y} \bC$ and let $\hat{f} : \hat{X} \to \bC$ be the
induced holomorphic map. The set of critical values of
$\hat{f}$  is $\hat{B} = u^{-1}(B)$. This  is a discrete
set of points of $\bC$. By an appropriate choice of the origin in
$\bC$, we may assume that $0 \not\in \hat{B}$. Then $u^{-1}(u(0))$
forms  a lattice $\Lambda$ such that $Y =
\bC/\Lambda$. We choose generators  of $\Lambda$, say $v_1$ and
$v_2$. Then each region
$$U_{n, m} := \{\alpha v_1 + \beta v_2 \, \vert\, n \le \alpha < n+1\, ,
m \le \beta < m+1
\}$$  ($n, m \in \bZ$) forms a fundamental domain for
$Y$. We first take the following  contractible graph
$$\Gamma_0 := \bR v_1 \cup \bigcup_{k \in \bZ} (\bR v_2 + kv_1)$$  in $\bC$.
Then connect each $nv_1 + mv_2 \in \Gamma_0$  to each
point  $b_{k, n,m}$,  of $\hat{B} \cap U_{n, m}$ by a   simple path, say
$\gamma_{k, n.m}$ in $U_{n, m}$ so that they are mutually
disjoint. Then
$$\Gamma := \Gamma_0 \cup \bigcup_{k, n, m} \gamma_{k,n, m}$$ becomes a
contractible tree connecting $0$ with the end
points, which are the critical values  of $\hat{f}$.

We next remove from
$\Gamma$ all the end points $b_{k, n, m}$ and
denote the resulting space by
$$\Gamma' := \Gamma \setminus \bigcup_{k, n, m} \{b_{k,n,m}\}\,\, .$$

Finally for each of the removed end points
$b_{k,n,m}$ we fill in a small ball
$B_{k,n,m}$ centered at $b_{k,n,m}$ and denote the resulting space by
$$\tilde{\Gamma} := \Gamma \cup \bigcup_{k, n, m} B_{k,n,m} = \Gamma'
\cup \bigcup_{k, n, m} B_{k,n,m}\,\,.$$

  We put
$$\hat{X}_{\Gamma} = \hat{f}^{-1}(\Gamma)\,\, ,\,\,
\hat{X}_{\Gamma'} = \hat{f}^{-1}(\Gamma')\,\, ,
\hat{X}_{k,n,m} = \hat{f}^{-1}(B_{k,n,m})\,\, ,\,\,$$
$$\hat{X}_{\tilde{\Gamma}}  = \hat{f}^{-1}(\tilde{\Gamma})\,\, ,\,\,
F_{k,n,m} = \hat{f}^{-1}(b_{k,n,m})\,\, .$$  As
$\Gamma$ is tree, one can choose a neighbourhood $U \subset \bC$  of
$\Gamma$ such that $\Gamma$ is a deformation
retract of $U$  and $U$ is also a deformation retract of $\bC$. Then
we obtain a  deformation retract from $\hat{X}$
to $\hat{X}_{\tilde{\Gamma}}$ and then to
$\hat{X}_{\Gamma}$.

Combining this with Proposition \ref{prop4}, we obtain
$$H^{2n}(\hat{X}_{\tilde{\Gamma}}, \bZ) \simeq H^{2n}(\hat{X}, \bZ)
\simeq \bZ\, .$$ On the other hand,
$H^{2n}(\hat{X}_{\tilde{\Gamma}}, \bZ)$ can be also  computed as
follows. We notice that $\Gamma'$ is contractible and that
the fibre over $\Gamma'$ is smooth  and homeomorphic to $F_0$. Here
$F_0$ is the fibre over  the base point $0$. Thus
$$H^{*}(\hat{X}_{\tilde{\Gamma}'}, \bZ) \simeq H^{*}(F_0, \bZ)\,\,
.$$ As each $F_{k,n,m}$ is a deformation retract
of $\hat{X}_{k,n,m}$  and since all the $\hat{X}_{k,n,m}$'s  are mutually disjoint
($B_{k,n,m}$ being sufficiently small),  it follows that
$$H^{*}(\bigcup_{k,n,m} \hat{X}_{k,n,m}, \bZ) \simeq \oplus_{k,n,m}
H^{*}(F_{k,n,m}, \bZ)\,\, .$$

Moreover, since each
$B_{k,n,m} \cap \Gamma'$ is  contractible and since the fibres over this set
are homeomorphic to $F_0$, we also have
$$H^{*}(\hat{X}_{\tilde{\Gamma}'} \cap (\bigcup_{k,n,m} \hat{X}_{k,n,m}), \bZ)
\simeq \bigoplus_{k,n,m} H^{*}(F_0, \bZ)\,\, .$$  
Thus by the
Mayer-Vietoris exact sequence we obtain
$$H^{2n}(\hat{X}_{\tilde{\Gamma}}, \bZ) =
H^{2n}(\hat{X}_{\tilde{\Gamma}'} \cup (\bigcup_{k,n,m} \hat{X}_{k,n,m}),
\bZ)$$
$$\simeq (H^{2n}(F_0, \bZ) \oplus \bigoplus_{k,n,m}  H^{2n}(F_{k,n,m},
\bZ))/(\bigoplus_{k,n,m} H^{2n}(F_0, \bZ))$$
$$\simeq H^{2n}(F_0, \bZ) \oplus \bigoplus_{k,n,m}  (H^{2n}(F_{k,n,m},
\bZ)/H^{2n}(F_0, \bZ))\,\, .$$ 
Since
$H^{2n}(\hat{X}_{\tilde{\Gamma}}, \bZ) \simeq \bZ$, it follows that
$$H^{2n}(F_{k,n,m}, \bZ) \simeq H^{2n}(F_0, \bZ)$$  for each singular
fibre $F_{k,n,m}$. Since $H^{2n}(F_0, \bZ) \simeq
\bZ$,  it follows that
$$H^{2n}(F_{k,n,m}, \bZ) \simeq \bZ\,\, .$$  This implies the
irreducibility of $F_{k,n,m}$, because
$F_{k,n,m}$ is a compact connected complex space of pure dimension
$n$ (hence of real dimension $2n$) so that the rank  of
$H^{2n}(F_{k,n,m}, \bZ)$ is the cardinality of  the set of
irreducible components of $F_{k,n,m}$.

Let $y \in Y$ and $Z = (X_y)_{\rm red}$ be the reduction of  the
fibre $X_y$ of the original fibration $f$.  Now we
know that $Z$ is irreducible. Moreover, by the proof  of Proposition
\ref{prop1}, we also know that the image of  the
natural map
$$\pi_1(Z) \to {\rm Ker}\, (\pi_1(X) \to \pi_1(Y))
\simeq \bZ^{2n}$$ has finite cokernel. Thus the image is
isomorphic  to $\bZ^{2n}$ as well. Consequently we have a
surjective homomorphism
$$\pi_1(Z) \to \bZ^{2n}\,\, .$$ Since $\bZ^{2n}$ is isomorphic to the
fundamental group of a real torus of dimension
$2n$, say $T^{2n}$,  the surjective morphism above is induced  by a
dominant continuous map
$$a : \pi_1(Z) \to \pi_1(T^{2n})\,\, .$$

Since $\pi_1(T^{2n})$ is
commutative, this naturally induces a surjective
morphism $$a : H_1(Y, \bZ) \to H_1(T^{2n}, \bZ)\,\, .$$ Passing to
the dual, we obtain an injective morphism
$$a^{*} : H^1(T^{2n}, \bZ) \simeq \bZ^{2n} \to H^1(Z, \bZ)\,\, .$$

Let $\nu : \tilde{Z} \to Z$ be a resolution  of singularities of the
complex space $Z$ (\cite{Hi77}, main result).
Then the composition of $a$ and $\nu$ defines a continuous map  $\tilde{a}$
such that its action on the first homology is the composition $\tilde{a}_*$
of $\nu_* : H_1(\tilde{Z}, \bZ) \to H_1(Z, \bZ)$ with $a_* : H_1(Z, 
\bZ) \to H_1(T^{2n}, \bZ)\,\, .$

Let $\langle \varphi_i \rangle_{i=1}^{2n}$ be a
basis of $H^1(T^{2n}, \bZ)$ and consider their inverse images
$ \tilde{a}^* (\varphi_i)$ as being represented by $d$-closed differential
forms. Then the  map given by
integration
$$\tilde{Z}\ni x \mapsto (\int_{x_0}^{x} \tilde{a}^*(\varphi_i))_{i=1}^{2n}$$
gives a differentiable map $\rho : \tilde{Z} \to T^{2n}$
such that the induced morphism $\rho_* : H_1(\tilde{Z}, \bZ) \to 
H_1(T^{2n}, \bZ)$
is the homomorphism  $\tilde{a}_*$.

Since we have  isomorphisms $$H^{2n}(\tilde{Z}, \bZ) \cong H^{2n}(Z, \bZ)
\cong H^{2n} (T^{2n}, \bZ)$$ it follows that  $\tilde{a}$ is dominant and
that we have  a surjective
homomorphism $\pi_1(\tilde{Z}) \to \bZ^{2n}$.

\end{proof}

Let us return to the proof of Theorem \ref{thm4} and recall that by Lemma \ref{lem4} we may assume $Y$ to be an elliptic curve. We need
only to show that $f$ is smooth; then  Theorem \ref{thm4} follows from
Theorem \ref{thm2}. \\
So assume that  a fibre $Z$ of $f$ is
singular. We already know that $Z$ is irreducible by  Proposition
\ref{prop4}.  Denote by $m$ the multiplicity of
$Z$ so that
$Z = mZ_{\rm red}$. Since $X$ is smooth, hence
$Z_{\rm red}$ is also a Cartier divisor on $X$. In particular,  the
dualizing sheaf $\omega_{Z_{\rm red}}$ is
invertible.  More precisely, by the adjunction formula and by $K_X =
\sO_X$,  we have
$$\omega_{Z_{\rm red}} = \sO_X(Z_{\rm red}) \otimes \sO_{Z_{\rm
red}}$$ and therefore
$$\omega_{Z_{\rm red}}^{\otimes m} \simeq \sO_{Z_{\rm red}}\, .$$  Since
$K_X = \sO_X$, the multiplicity $m$ is nothing but
the minimal positive integer satisfying this isomorphism (see eg.
\cite{BHPV04}, p.111, Lemma 8.3).

Let $\tilde{Z}$
be the minimal  resolution of the normalization $Z'$ of $Z_{\rm
red}$. Since  $Z_{\rm red}$ is Gorenstein, the conductor ideal of $Z' \to 
Z_{\rm red}$  is
of pure dimension $1$ (if $Z_{\rm red}$ is not normal). Moreover,
since $\tilde{Z}$ is a minimal resolution, the canonical divisors of 
$Z_{\rm red}$ and $\tilde{Z}$
differ by an effective divisor, called classically the 
subadjunction divisor.
We conclude the well known formula
$$\omega_{\tilde{Z}}^{\otimes m} \simeq \sO_{\tilde{Z}}(-D)\,\, ,$$
where $D$ is an effective divisor, possibly $0$ (if and only if $Z_{\rm red}$ 
is normal with at most rational double points as singularities). 
\vskip .2cm Suppose first $D = 0$, hence $\kappa(\tilde{Z}) = 0$. Since $\pi_1(\tilde{Z})$  maps
onto $\pi_1(T) \simeq \bZ^4$ by Proposition
\ref{prop4}, it follows again from  surface classification

  that $\tilde{Z}$ is a complex torus of dimension
$2$.  Since a complex torus of dimension $2$ has no curve with 
negative self-intersection,
it has no non-trivial crepant contraction to a normal
surface $Z'$ . Hence the three surfaces
$$\tilde{Z}\,\, ,\,\, Z'\,\, ,\,\, Z_{\rm red}$$  are all isomorphic.

In particular, $Z_{\rm red}$ is  also a smooth
complex torus (of dimension $2$) and $\omega_{Z_{\rm red}} \simeq
\sO_{Z_{\rm red}}$.  This implies $m=1$ and  $Z
= Z_{\rm red}$ is smooth.
\vskip .2cm
If $D \not= 0$, then $\kappa(\tilde{Z}) = -\infty$. From the
classification of compact complex surfaces with
$\kappa = -\infty$ (see eg. \cite{BHPV04}, p.244, Table 10),
$\tilde{Z}$ is either birationally ruled, say over a curve $C$,
or a surface of class $VII$. If it is birationally ruled, then $H^1(\tilde{Z},
\bZ)$ is the pullback of $H^1(C, \bZ)$.
This however implies that $$\wedge^4 H^1(\tilde{Z}, \bZ) = 0,$$ a contradiction
to the proven  injectivity of $\wedge^4 H^1(T, \bZ) \to
\wedge^4 H^1(\tilde{Z}, \bZ)$.

If $\tilde{Z}$ is of class
$VII$, then
$b_1(\tilde{Z}) = 1$ 
%(See eg. \cite{BHPV04}, Page 244, Table 10).
This again contradicts the surjectivity of
$$\pi_1(\tilde{Z}) \to \pi_1(T) \simeq \bZ^4$$ in Proposition \ref{prop4}.
\vskip .2cm 
Hence $f : X \to Y$ is smooth.\\

\hfill Q.E.D. for Theorem \ref{thm4}.

\vskip .3cm  Theorem \ref{main} now follows from Theorems \ref{thm0},
\ref{thm3} and
\ref{thm4}.

\section{Threefolds without meromorphic functions} 

Instead of assuming the existence of meromorphic functions we will require 
in this short concluding section the existence
of some holomorphic   vector fields or holomorphic 1-forms. 

\begin{theorem}\label{trivial1} Let $X$ be a smooth compact complex threefold 
which is homotopy equivalent to a torus. If the tangent bundle
$T_X$ is trivial, then $X$ is biholomorphic to a torus. 
\end{theorem}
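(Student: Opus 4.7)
The plan is to use the trivialization of $T_X$ to present $X$ as a quotient of a simply-connected complex Lie group, and then exploit the abelianity of $\pi_1(X)$ to force this group to be abelian.

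Pick a global holomorphic frame $\xi_1, \xi_2, \xi_3$ of $T_X$ and the dual frame $\omega_1, \omega_2, \omega_3$ of $\Omega_X^1$. Since $X$ is compact, $H^0(X, \sO_X) = \bC$, so the structure ``constants'' in $[\xi_i, \xi_j] = \sum_k c_{ij}^k\, \xi_k$ are actual complex numbers, equipping $\bC^3$ with a complex Lie algebra structure $\mathfrak{g}$. By Wang's theorem on compact complex parallelizable manifolds, $X \simeq \Gamma \backslash G$, where $G$ is the simply-connected complex Lie group with Lie algebra $\mathfrak{g}$ and $\Gamma = \pi_1(X)$ is a cocompact discrete subgroup.

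Cartan's formula yields $d\omega_i = -\sum_{j<k} c_{jk}^i\, \omega_j \wedge \omega_k$, so every $\omega_i$ is $d$-closed exactly when $\mathfrak{g}$ is abelian. In that case $h^0(X, d\sO_X) = 3 = \dim X$ and Theorem \ref{thmCa} applies to conclude that $X$ is a complex torus. Thus the whole problem reduces to proving that $\mathfrak{g}$ is abelian, equivalently that $G \simeq \bC^3$.

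Since $X$ is homotopy equivalent to a real $6$-torus, $\Gamma \cong \bZ^6$ is abelian, all higher homotopy groups of $X$ vanish, and consequently the universal cover $G$ is contractible; by the Levi decomposition this forces $G$ to be solvable. Set $N = [G, G]$, which is a simply-connected nilpotent complex Lie group (the derived algebra of a solvable Lie algebra is nilpotent by Lie's theorem). Mostow's structure theorem for solvmanifolds gives that $\Gamma_N := \Gamma \cap N$ is a cocompact lattice in $N$, and $\Gamma/\Gamma_N$ is a cocompact lattice in the abelian group $G/N$. Abelianity of $\Gamma_N$ together with Malcev rigidity for simply-connected nilpotent complex Lie groups forces $N$ to be abelian. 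Commutativity of $\Gamma$ then gives $\mathrm{Ad}(\gamma)\vert_N = \mathrm{id}_N$ for every $\gamma \in \Gamma$, and Zariski density of $\Gamma/\Gamma_N$ inside $G/N$ propagates this to show that the adjoint action of $G/N$ on $N$ is trivial. Hence $N$ is central, $G$ is nilpotent of class $\leq 2$, and a final application of Malcev rigidity to the abelian lattice $\Gamma \subset G$ forces $G \cong \bC^3$.

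The main obstacle is this last chain of reductions: passing from ``$\pi_1(X)$ is abelian'' to ``$G$ is abelian''. The argument relies on classical structural results for (complex) solvmanifolds (Mostow's structure theorem, Malcev rigidity, Zariski density of lattices), which transfer intact from the real to the complex-analytic setting. Once $G \simeq \bC^3$ is established, $X = \Gamma \backslash \bC^3$ is manifestly a complex torus.
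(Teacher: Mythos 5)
Your proof follows essentially the same route as the paper: Wang's theorem presents $X \simeq \Gamma\backslash G$ with $G$ simply connected of dimension $3$, contractibility of the universal cover (equivalently, the paper's computation $b_3({\rm SL}(2,\bC))=3$) excludes the semisimple case so $G$ is solvable, and the abelianity of the cocompact lattice $\Gamma\simeq\bZ^6$ then forces $G\simeq\bC^3$. The only divergence is that the paper outsources this last implication to Winkelmann [Wi98, (3.14.6)] while you sketch it via Mostow and Malcev; in that sketch the phrase ``Zariski density of $\Gamma/\Gamma_N$ in $G/N$'' is not quite the right mechanism (a real homomorphism killing a lattice need not be trivial, e.g.\ $\bR\to {\rm SO}(2)$ killing $\bZ$) --- the correct and available argument is that the holomorphic homomorphism $G/N\to {\rm GL}(N)$ annihilates a cocompact lattice, hence descends to a compact complex torus and is constant.
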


\begin{proof} By our assumption (see  for instance \cite{Hu90}, page 144)
 $X \simeq G/\Gamma $, where
$G$ is a simply connected complex
$3$-dimensional Lie group  and $\Gamma \simeq \pi_1(X)$ cocompact. By Lemma \ref{group}, $G
\simeq \mathbb C^3$ or $G \simeq SL(2,\mathbb C)$ as groups. But $SL(2,\mathbb C)$ is not
contractible, e.g.
$$ b_3(SL(2,\mathbb C)) = 3$$
(see e.g. \cite{Ko59}),
hence $X \simeq \mathbb C^3/\mathbb Z^6,$ and our claim follows.

\end{proof}   

The paper \cite{Ko59} was brought to our attention by I.Radloff. 

For discussions concerning the first part of the following lemma, 
which is of course well-known to the experts, we would like to thank J.Winkelmann and in particular
A.Huckleberry.

\begin{lemma} \label{group}  
Let $G$ be a simply connected 3-dimensional complex Lie group. Then 
\begin{enumerate}
\item Either $G \simeq {\rm SL}(2,\bC)$ as Lie group or $G$ is solvable and 
$G \simeq \bC^3$ as complex manifold.
\item If $G$ is solvable and if $G$ contains a lattice $\Gamma$ 
(i.e., a discrete subgroup such that $G/\Gamma$ has bounded volume), such that
$\Gamma$ is abelian, then $G$ is abelian and therefore $G \simeq \bC^3$ as Lie group. 
\end{enumerate}   
\end{lemma}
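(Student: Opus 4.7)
For part (1), the plan is to analyze the Lie algebra $\mathfrak{g}$ of $G$ via the Levi decomposition $\mathfrak{g} = \mathfrak{r} \oplus \mathfrak{s}$ into solvable radical and semisimple Levi factor. Since every nonzero complex semisimple Lie algebra has dimension at least three, with $\mathfrak{sl}(2,\bC)$ the unique such algebra of dimension three, the case $\mathfrak{s} \neq 0$ forces $\mathfrak{s} = \mathfrak{g} = \mathfrak{sl}(2,\bC)$ and $\mathfrak{r} = 0$. Because $SL(2,\bC)$ is itself simply connected, this gives $G \simeq SL(2,\bC)$ as complex Lie groups. In the remaining case $\mathfrak{g}$ is solvable, and I would invoke the classical structure result that any simply connected complex solvable Lie group is biholomorphic to $\bC^n$: by induction along the derived series, each successive quotient $G^{(i)}/G^{(i+1)}$ is a simply connected abelian complex Lie group, hence $\bC^{n_i}$, and the holomorphic principal bundle $G^{(i)} \to G^{(i)}/G^{(i+1)}$ over a Stein base is holomorphically trivial by the vanishing of the relevant coherent sheaf cohomology.

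For part (2), I would proceed by contradiction, assuming that $G$ is a non-abelian simply connected solvable complex Lie group admitting an abelian lattice $\Gamma$ (which is automatically cocompact in the solvable setting). Let $N \subset G$ be the nilradical, a closed connected normal nilpotent subgroup containing $[G,G]$; then $A := G/N$ is abelian and, by the homotopy exact sequence of the fibration $N \to G \to A$, simply connected, so $A \simeq \bC^m$ as complex Lie groups. By Mostow's theorem on lattices in solvable Lie groups, $\Gamma \cap N$ is a uniform lattice in $N$ and $\bar{\Gamma} := \Gamma/(\Gamma \cap N)$ is a uniform lattice in $A$. Since $\Gamma \cap N$ is abelian and cocompact in the simply connected nilpotent group $N$, Malcev's theorem (the nilpotency class of a simply connected nilpotent Lie group equals that of any uniform lattice) forces $N$ itself to be abelian; write $N \simeq \bC^k$.

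The core step is then a rigidity argument for the adjoint action. The conjugation action of $G$ on $N$ is holomorphic, and since $N$ is now abelian it descends to a holomorphic homomorphism $\rho : A \to GL(N) \simeq GL(k,\bC)$. Commutativity of $\Gamma$ means every $\gamma \in \Gamma$ centralizes $\Gamma \cap N$, so $\rho(\bar{\Gamma})$ fixes $\Gamma \cap N$ pointwise; since $\Gamma \cap N$ spans $N$ as a real vector space, $\rho(\bar{\Gamma}) = \{I\}$. Hence $\rho$ further descends to a holomorphic map from the compact complex torus $A/\bar{\Gamma}$ into the affine algebraic group $GL(k,\bC)$, which must be constant. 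Therefore $\rho$ is trivial, $G$ acts trivially on $N$, and $[G,G] \subseteq N \subseteq Z(G)$, so $G$ is at most $2$-step nilpotent. A final application of Malcev's theorem then forces $G$ itself to be abelian, contradicting the assumption. Once $G$ is known to be abelian, simply connected and of complex dimension $3$, it is isomorphic to $\bC^3$ as a complex Lie group.

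The main obstacle is the rigidity argument in part (2), resting on two invocations of Mostow--Malcev structure theory (cocompactness of $\Gamma \cap N$ in the nilradical, and the class-preservation property for uniform lattices in simply connected nilpotent Lie groups). The remaining ingredients --- the Levi decomposition, the fact that a simply connected complex solvable Lie group is biholomorphic to $\bC^n$, and the constancy of holomorphic maps from a compact complex manifold into an affine algebraic group --- are standard.
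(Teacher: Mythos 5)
Your argument is correct, and it is worth comparing with the paper's, which is almost entirely by citation. For part (1) you follow the same route as the paper: Levi--Malcev plus a dimension count isolates ${\rm SL}(2,\bC)$, and for the solvable case the paper simply quotes [Na75], Prop.~1.4, whose proof is essentially the derived-series induction you sketch. One small imprecision: for a general non-abelian structure group the triviality of the holomorphic principal bundle over a Stein (indeed contractible) base is Grauert's Oka principle rather than literal vanishing of coherent sheaf cohomology; in dimension $3$, however, $[G,G]$ is at most $2$-dimensional nilpotent, hence abelian, so the $H^1(\bC^m,\sO^{\oplus k})=0$ argument does apply verbatim. For part (2) the paper just writes that $\Gamma$ abelian implies $G$ abelian ``by [Wi98] (3.14.6)'', whereas you supply an actual proof: Mostow's theorem to intersect $\Gamma$ with the nilradical $N$ and project to $G/N$, Malcev rigidity to force $N$ abelian, and then the key observation that the adjoint representation descends to a holomorphic map from the compact torus $(G/N)/\bar\Gamma$ into ${\rm GL}(k,\bC)$ and is therefore constant, giving $N\subseteq Z(G)$, $G$ nilpotent of class $\le 2$, and finally $G$ abelian by Malcev again. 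This chain of steps is sound (the linear map ${\rm Ad}(\gamma)|_{\mathfrak n}$ fixes the full-rank lattice $\log(\Gamma\cap N)$, hence is the identity), and what it buys is a self-contained proof of the rigidity statement that the paper outsources to Winkelmann; what the paper's approach buys is brevity and a reference valid in greater generality.
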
   

\begin{proof} (1) By the Levi-Malcev decomposition, $G$ is either semi-simple or solvable 
by reasons of dimension. 
In the semi-simple case, $G \simeq {\rm SL}(2,\bC).$  If $G$ is solvable, then
$G \simeq \bC^3 $ (even in any dimension), see e.g. [Na75], prop.1.4. \\
(2) Since $\Gamma $ is abelian, so does $G$ by e.g. [Wi98] (3.14.6). 
Hence $G \simeq \bC^3$ as Lie group. 

\end{proof}     
 
\begin{lemma} \label{a=0}  
Let $X$ be a compact  complex manifold with algebraic dimension
$ a(X) = 0$. Let $V$ be a holomorphic rank $r$ bundle on $X$: then the evaluation 
homomorphism 
$$  ev : H^0( X, V) \otimes  \hol_X  \ra V$$ 
is injective. In particular,   all the 
global sections of $V$ are carried by the  trivial rank $h$ subsheaf
$H^0( X, V) \otimes  \hol_X $.

In particular, $ h^0( X, V) = h \leq r$ and , if $h=r$ and $ det (V) \cong \hol_X$,
 then $V$ is trivial.
 
\end{lemma}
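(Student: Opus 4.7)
The plan is to reduce injectivity of the evaluation map to a statement about meromorphic functions on $X$, where the hypothesis $a(X)=0$ forces all such functions to be constants. Fix a basis $s_1,\ldots,s_h$ of $H^0(X,V)$ and let $k$ be the generic rank of $ev$, that is, the rank of the image $\mathcal{F}:=\im(ev)\subset V$ at a general point. Since $\ker(ev)$ is a subsheaf of the locally free, hence torsion-free, sheaf $\hol_X^h$, the map $ev$ is globally injective if and only if it is injective at the generic point, i.e., if and only if $k=h$. Granting this, rank comparison inside $V$ yields $h\le r$ automatically.

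To prove $k=h$, after reordering I may assume $s_1,\ldots,s_k$ are generically linearly independent and form $\omega:=s_1\wedge\cdots\wedge s_k\in H^0(X,\Lambda^k V)$, which is not identically zero by the choice of $k$. For any $i>k$ the sections $s_1,\ldots,s_k,s_i$ are pointwise linearly dependent on a dense open set $U\subset X$, and on $U$ I can write $s_i=\sum_{j=1}^k f_j\,s_j$ with uniquely determined scalars $f_j(x)$. Wedging with $s_1\wedge\cdots\wedge\widehat{s_j}\wedge\cdots\wedge s_k$ produces the identity
$$s_1\wedge\cdots\wedge s_{j-1}\wedge s_i\wedge s_{j+1}\wedge\cdots\wedge s_k \;=\; \pm f_j\,\omega$$
between two honest holomorphic sections of $\Lambda^k V$; hence, in any local trivialization of $\Lambda^k V$, $f_j$ appears as the ratio of corresponding components of these sections and so extends to a \emph{global} meromorphic function on $X$. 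Now $a(X)=0$ forces each $f_j=c_j\in\bC$ to be a constant, so $s_i-\sum_j c_j s_j$ vanishes on the dense open set $U$ and hence identically, contradicting the linear independence of the basis $s_1,\ldots,s_h$.

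For the final assertion, suppose $h=r$ and $\det V\cong\hol_X$. Taking determinants of the injection $ev:\hol_X^r\hookrightarrow V$ yields a morphism $\det(ev):\hol_X\to\hol_X$, which is nonzero at the generic point since $ev$ has generic rank $r$. Because $H^0(X,\hol_X)=\bC$, this morphism is multiplication by a nonzero constant, hence an isomorphism. A morphism between rank $r$ vector bundles whose determinant is a nowhere vanishing section is fibrewise invertible, so $ev$ itself is an isomorphism and $V\cong\hol_X^r$. The main obstacle in the argument is the middle step: producing the coefficients $f_j$ as honest \emph{global} meromorphic functions on $X$, rather than merely locally defined holomorphic functions on $U$; the wedge-product formulation handles this intrinsically, independently of any trivialization of $V$, after which the hypothesis $a(X)=0$ closes the argument at once.
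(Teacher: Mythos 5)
Your proof is correct. It follows the same overall strategy as the paper -- use $a(X)=0$ to force the pointwise linear-dependence data among the sections to be constant, then derive a contradiction from a section vanishing on a dense open set -- but the key step is implemented differently. The paper packages the varying kernel $\ker(ev_x)$ into a meromorphic map $X \dashrightarrow {\rm Grass}(h-m, H^0(X,V))$ and invokes the fact that, since the Grassmannian is projective and $a(X)=0$, this map must be constant; the constant kernel then consists of sections vanishing at a general point, hence is zero. You instead unpack this in Pl\"ucker-style coordinates: you exhibit the dependence coefficients $f_j$ as explicit global meromorphic functions, namely ratios of components of holomorphic sections of $\Lambda^k V$, and then use only the literal content of $a(X)=0$ (every meromorphic function is constant). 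Your route is more elementary and self-contained, avoiding the auxiliary fact that meromorphic maps from $X$ to projective varieties are constant, at the cost of a slightly longer computation; the paper's Grassmannian formulation is shorter and more conceptual. The treatment of the final claim (for $h=r$ and $\det V \cong \hol_X$, the determinant of $ev$ is a nonvanishing constant, hence $ev$ is an isomorphism) is identical in both arguments.
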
   

\begin{proof}
For each point $x \in X$ we have a linear map of $\CC$-vector spaces 
$$  ev_x : H^0( X, V)  \ra V (x), $$ 
where $V(x) : = V_x / \mathfrak M_x V_x$ is the fibre of the vector bundle 
over the point $x$ ($\mathfrak M_x \subset \hol_x$ being the maximal ideal).

We claim that $ev_x$ is  injective for a general point $x \in X$. 

Otherwise, let $m$ be the generic rank of
$ev_x$: then we get a meromorphic map 
$$ k :  X \dashrightarrow  Grass (h-m, H^0( X, V) )$$
associating to $x$ the subspace $ker (ev_x)$.

By the projectivity of the Grassmann manifold, $k$ must be constant.
But a section vanishing at a general point is identically zero, which proves our assertion
that $ H^0( X, V) \otimes  \hol_X $ yields a subsheaf of $ V$.

Moreover, if $h=r$, the homomorphism $ev$ induces a non constant homomorphism
$ \Lambda^r (ev) : \hol_X \ra \det(V)$. Thus, if $\det (V)$ is trivial,
$ \Lambda^r (ev)$ is invertible, hence $ev$ is an isomorphism.

\end{proof}  

\begin{corollary}\label{n}
Let $X$ be a compact  complex manifold of dimension $n$ with algebraic dimension
$ a(X) = 0$ and with trivial canonical divisor  $K_X$. 
Then 
$$ h^0(\Omega^1_X) \geq n \Leftrightarrow h^0(\Theta_X) \geq n
 \Leftrightarrow \Theta_X \cong \hol_X^n \Leftrightarrow \Omega^1_X \cong \hol_X^n.$$       
\end{corollary}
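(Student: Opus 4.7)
The plan is to deduce all four equivalences directly from Lemma \ref{a=0}, applied separately to the rank $n$ bundles $\Omega^1_X$ and $\Theta_X$, together with the elementary observation that the trivial bundle is self-dual. The hypothesis $K_X \cong \hol_X$ is what enables this: it gives $\det \Omega^1_X \cong \hol_X$ and $\det \Theta_X \cong \hol_X$, matching precisely the ``$h=r$ and $\det(V)\cong\hol_X$'' case of Lemma \ref{a=0}.

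First I would observe that for any holomorphic bundle $V$ of rank $r$ on $X$, Lemma \ref{a=0} forces $h^0(X,V) \leq r$. Applied to $V = \Omega^1_X$, this gives $h^0(\Omega^1_X) \leq n$; combined with the hypothesis $h^0(\Omega^1_X) \geq n$, we get $h^0(\Omega^1_X) = n = \operatorname{rk}\Omega^1_X$. Since moreover $\det \Omega^1_X = K_X \cong \hol_X$, the final clause of Lemma \ref{a=0} yields that the evaluation map $\hol_X^n \to \Omega^1_X$ is an isomorphism. This gives the implication $h^0(\Omega^1_X) \geq n \Rightarrow \Omega^1_X \cong \hol_X^n$, and the reverse implication is trivial. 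The same argument, applied to $V = \Theta_X$ (which has rank $n$ and determinant $-K_X \cong \hol_X$), gives $h^0(\Theta_X) \geq n \Leftrightarrow \Theta_X \cong \hol_X^n$.

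Finally, to bridge the $\Omega^1_X$-statements with the $\Theta_X$-statements, I would simply note that $\Theta_X = (\Omega^1_X)^\vee$ and that $\hol_X^n$ is self-dual; hence $\Omega^1_X \cong \hol_X^n$ if and only if $\Theta_X \cong \hol_X^n$. Chaining everything together yields the four-way equivalence.

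There is essentially no obstacle here: the only substantive input is Lemma \ref{a=0}, which has already been proved; the role of the hypotheses $a(X)=0$ and $K_X \cong \hol_X$ is exactly to match the hypotheses of that lemma.
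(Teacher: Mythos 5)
Your proposal is correct and is exactly the argument the paper intends: the corollary is stated without proof immediately after Lemma \ref{a=0}, and the intended deduction is precisely to apply that lemma to the rank-$n$ bundles $\Omega^1_X$ and $\Theta_X$ (both with trivial determinant thanks to $K_X\equiv 0$) and to link the two sides by dualizing. Nothing is missing.
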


\begin{theorem}\label{trivial2} Let $X$ be a smooth compact complex threefold with 
 trivial canonical divisor  $K_X$ which is homotopy equivalent to a torus. 

If  $h^0(X,\Omega^1_X) \geq 3$ or if $h^0(X,T_X) \geq 3,$ then
$X$ is biholomorphic to a torus.
\end{theorem}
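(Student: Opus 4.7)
The plan is to dichotomize the proof according to the value of the algebraic dimension $a(X)$ of $X$.

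If $a(X) > 0$, then the hypothesis on sections of $T_X$ or $\Omega^1_X$ is actually not used at all. All three hypotheses of Corollary \ref{cor-algdim} are in place: $X$ is homotopy equivalent to a complex torus of dimension $3$, the condition $a(X) > 0$ supplies a non-constant meromorphic function, and $K_X \equiv 0$ by assumption. Corollary \ref{cor-algdim} then yields directly that $X$ is biholomorphic to a complex torus.

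If instead $a(X) = 0$, I would invoke Corollary \ref{n} with $n = 3$. Under the standing hypothesis $\sO_X(K_X) \cong \sO_X$ together with $a(X) = 0$, the chain of equivalences in Corollary \ref{n} makes the two hypotheses $h^0(X,\Omega^1_X) \geq 3$ and $h^0(X,T_X) \geq 3$ interchangeable, and each of them forces $T_X \cong \sO_X^{3}$, i.e., the tangent bundle of $X$ is holomorphically trivial. At this point Theorem \ref{trivial1} takes over and yields that $X$ is biholomorphic to a torus.

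I do not foresee any substantial obstacle, since the real content has already been isolated in the preceding results: Corollary \ref{cor-algdim} handles all threefolds with a non-constant meromorphic function, while the case $a(X) = 0$ is precisely the situation in which the Lemma \ref{a=0}/Corollary \ref{n} mechanism bites, turning a mere numerical hypothesis on $h^0$ into genuine triviality of the bundle. The symmetry between $\Omega^1_X$ and $T_X$ in Corollary \ref{n} is what lets the two alternative hypotheses be treated uniformly and, via the isomorphism $T_X \cong \sO_X^{3}$, be fed into Theorem \ref{trivial1}.
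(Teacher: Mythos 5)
Your proposal is correct and follows essentially the same route as the paper: the paper likewise reduces to the case $a(X)=0$ via the Main Theorem (of which Corollary \ref{cor-algdim} is the relevant special case), then applies Corollary \ref{n} to trivialize the (co)tangent bundle and concludes with Theorem \ref{trivial1}. No gaps.
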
 

\begin{proof} 
By our Main Theorem 1.1 we may assume $a(X) = 0$, and applying the previous corollary \ref{n}
we get $\Omega^1_X \cong \hol_X^3.$ Now we conclude by Theorem \ref{trivial1}.
 
\end{proof} 

\begin{remark} {\rm It seems already difficult to exclude the case $h^0(\Omega_X^1) = 2.$ 
Taking a basis $\omega_1, \omega_2,$ 
we are able to exclude the case when both $\omega_i$ are non-closed. Since $X$ is not necessarily
K\"ahler, the existence of a   closed holomorphic 1-form does not lead to a non-trivial Albanese map,
which is the obstacle to conclude. }
\end{remark}

\vskip 1cm

\vskip .2cm \noindent Fabrizio Catanese \\ Lehrstuhl Mathematik VIII,
Mathematisches Institut\\ Universit\"at
Bayreuth, D-95440 Bayreuth, Germany\\ fabrizio.catanese@uni-bayreuth.de

\vskip .2cm \noindent Keiji Oguiso \\ Department of Mathematics\\
Osaka University, Toyonaka 560-0043 Osaka, Japan\\
oguiso@math.sci.osaka-u.ac.jp

\vskip .2cm \noindent Thomas Peternell \\ Lehrstuhl Mathematik I,
Mathematisches Institut\\ Universit\"at Bayreuth,
D-95440 Bayreuth, Germany\\ thomas.peternell@uni-bayreuth.de


\begin{thebibliography}{999999}

\bibitem[BHPV04]{BHPV04} Barth, W. P., Hulek, K., Peters, C. A. M.,
Van de Ven,A.: \textit{Compact complex surfaces},
Springer(2004).

\bibitem[Bl53]{Bl53} Blanchard, A.: \textit{Recherche de structures
analytiques complexes sur certaines
vari\'et\'es}, C. R. Acad. Sci., Paris,  S\'er. I, Math. {\bf 238}
(1953) 657--659.

\bibitem[Bl56]{Bl56} Blanchard, A.: \textit{Sur les variŽtŽs analytiques complexes.} 
 Ann. Sci.
Ecole Norm. Sup. (3)  73  (1956), 157--202.

\bibitem[Ca95]{Ca95} Catanese, F.: \textit{Compact complex manifolds
bimeromorphic to tori}, Abelian varieties
(Egloffstein, 1993)  de Gruyter, Berlin (1995) 55--62.

\bibitem[Ca02]{Ca02} Catanese, F.: \textit{Deformation types of real
and complex manifolds}, Contemporary trends in
algebraic geometry and algebraic topology (Tianjin, 2000) Nankai
Tracts Math. {\bf 5} (2002) 195--238.

\bibitem[Ca04]{Ca04} Catanese, F.: \textit{Deformation in the large
of some complex manifolds. I.}, Ann. Mat. Pura
Appl. {\bf 183} (2004) 261--289.

\bibitem[CKO03]{CKO03} Catanese, F., Keum, J.H., Oguiso. K.:
\textit{Some remarks on the universal cover of an open $K3$ surface},
Math. Ann. {\bf 325} (2003) 279--286.

\bibitem[Fj78]{Fj78} Fujiki, A.: \textit{On automorphism groups of compact
K\"ahler manifolds}, Invent. Math. {\bf 44}
(1978) 225--258.

\bibitem[Fu78]{Fu78} Fujita, T.: \textit{On K\"ahler fibre spaces
over curves}, J. Math. Soc. Japan {\bf 30} (1978)
779--794.

\bibitem[Fu86]{Fu86} Fujita, T.: \textit{Zariski decomposition and
canonical rings of elliptic threefolds}, J. Math.
Soc. Japan {\bf 38} (1986) 19--37.

\bibitem[Gr68]{Gr68} Griffiths, P. A.: \textit{Periods of integrals
on algebraic manifolds. II. Local study of the
period mapping}, Amer. J. Math.  {\bf 90} (1968) 805--865.

\bibitem[GHJ03]{GHJ03} Gross, M., Huybrechts, D., and Joyce, D.:
\textit{Calabi-Yau manifolds and related geometries},
Universitext. Springer-Verlag, Berlin (2003).

\bibitem[Hi75]{Hi75} Hironaka, H.: \textit{Flattening theorem in
complex-analytic geometry}, Amer. J. Math. {\bf 97}
(1975) 503--547.

\bibitem[Hi77]{Hi77} Hironaka, H.: \textit{Bimeromorphic smoothing of
a complex-analytic space}, Acta Math. Vietnam.
{\bf 2} (1977) 103--168.

\bibitem[Hu90]{Hu90}Huckelberry, A : \textit{Actions of groups of holomorphic transformations.}
Encycl. Math. Sciences vol. 69, ed. W.Barth, R.Narasimhan, 
Springer (1990),143--196.

\bibitem[Ka81]{Ka81} Kawamata, Y.: \textit{Characterization of
abelian  varieties}, Compositio Math. {\bf 43} (1981)
253--276.

\bibitem[Ko59]{Ko59} Kostant,B.: \textit{ The principal three-dimensional subgroup and the Betti numbers of
a complex simple Lie group}, Amer. J. Math. {\bf 81} (1959), 973--1032

\bibitem[Na75]{Na75} Nakamura,I.: \textit{Complex parallelizable manifold and their small deformations}, 
J. Diff. Geom. {\bf 10} (1975), 85--112 

\bibitem[No83]{No83} Nori, M. V.: \textit{Zariski's conjecture and
related problems}, Ann. Sci. \'Ecole Norm. Sup.
{\bf 16} (1983) 305--344.

\bibitem[So75]{So75} Sommese, A.J.: \textit{Quaternionic manifolds},
Math. Ann. {\bf 212} (1975) 191--214.

\bibitem[Ue75]{Ue75} Ueno,K.: \textit{Classification theory of
algebraic varieties and compact complex  spaces},
Lecture Notes in Mathematics {\bf 439} Springer-Verlag, Berlin-New York (1975).

\bibitem[Ue87]{Ue87} Ueno, K.: \textit{On compact analytic threefolds
with non trivial Albanese tori}, Math. Ann.
{\bf 278} (1987) 41--70.

\bibitem[Wi98]{Wi98} Winkelmann,J.: \textit{Complex analytic geometry of complex parallelizable 
manifolds}, M\'emoirs de la S.M.F.; {\bf 72-73} (1998)

\end{thebibliography}
\end{document}